\theoremstyle{plain}
\newtheorem{thm}{Theorem}[section]
\newtheorem{lem}[thm]{Lemma}
\newtheorem{prop}[thm]{Proposition}
\newtheorem{cor}[thm]{Corollary}
\theoremstyle{definition}
\newtheorem{eg}[thm]{Example}
\theoremstyle{remark}
\newtheorem{rmk}[thm]{Remark}
\def\Z{{\mathbf Z}}
\def\Q{{\mathbf Q}}
\def\C{{\mathbf C}}
\def\A{{\mathbf A}}
\def\cD{\mathcal{D}}
\def\cH{\mathcal{H}}
\def\cJ{\mathcal{J}}
\def\cM{\mathcal{M}}
\def\cN{\mathcal{N}}
\def\cO{\mathcal{O}}
\def\cT{\mathcal{T}}
\def\frm{\mathfrak{m}}
\def\.{\cdot}
\def\^{\widehat}
\def\({\left(}
\def\){\right)}
\newcommand{\llcurlybracket}{\{\negthinspace\{}
\newcommand{\rrcurlybracket}{\}\negthinspace\}}
\renewcommand{\and}{ \ \ \text{ and } \ \ }
\DeclareMathOperator{\lct} {lct}
\begin{document}

\title{Minimal exponents of hyperplane sections:
a~conjecture of Teissier}

\author{Bradley Dirks}
\author{Mircea Musta\c{t}\u{a}}

\address{Department of Mathematics, University of Michigan, 530 Church Street, Ann Arbor, MI 48109, USA}

\email{bdirks@umich.edu}
\email{mmustata@umich.edu}

\thanks{The authors were partially supported by NSF grant DMS-1701622.}

\subjclass[2010]{14F10, 14B05, 32S25}

\begin{abstract}
We prove a conjecture of Teissier asserting that if $f$ has an isolated singularity at $P$ and
$H$ is a smooth hypersurface through $P$, then $\widetilde{\alpha}_P(f)\geq \widetilde{\alpha}_P(f\vert_H)+\frac{1}{\theta_P(f)+1}$,
where $\widetilde{\alpha}_P(f)$ and $\widetilde{\alpha}_P(f\vert_H)$ are the minimal exponents at $P$ of $f$ and $f\vert_H$, respectively,
and $\theta_P(f)$ is an invariant obtained by comparing the integral closures of the powers of the Jacobian ideal of $f$ and of the ideal defining $P$.
The proof builds on the approaches of Loeser \cite{Loeser} and Elduque-Musta\c{t}\u{a} \cite{EM}. The new ingredients are a result concerning the behavior of Hodge 
ideals with respect to finite maps and a result about the behavior of certain Hodge ideals for families of isolated singularities with constant Milnor number. 
In the opposite direction, we show that for every $f$, if $H$ is a general hypersurface through $P$, then
 $\widetilde{\alpha}_P(f)\leq \widetilde{\alpha}_P(f\vert_H)+\frac{1}{{\rm mult}_P(f)}$, extending a result of Loeser from the case of isolated singularities.

\end{abstract}

\maketitle

\section{Introduction}

Let $X$ be a smooth complex algebraic variety and $f\in\cO_X(X)$ nonzero, defining a hypersurface $Y$.
For a point $P\in Y$, the \emph{minimal exponent} $\widetilde{\alpha}_P(f)$ can be defined as the negative of the largest root of the reduced Bernstein-Sato polynomial
of $f$ at $P$. This is a very interesting invariant of singularities that refines the \emph{log canonical threshold} $\lct_P(f)$; more precisely, by a result
of Lichtin and Koll\'{a}r, we have ${\rm lct}_P(f)=\min\{\widetilde{\alpha}_P(f),1\}$ (see \cite[Section~10]{Kollar}). Moreover, by a result of Saito
\cite[Theorem~0.4]{Saito-B} the hypersurface $Y$ has rational singularities at $P$ if and only if $\widetilde{\alpha}_P(f)>1$.
In the setting where $Y$ has an isolated singularity at $P$, the minimal exponent can be described via asymptotic expansion of integrals along vanishing cycles,
see \cite{Malgrange} and \cite{Malgrange2}. In this incarnation,  it has been extensively studied in
\cite{AGZV} and is also known as the \emph{Arnold exponent} of $f$ at $P$.

In this article we are interested in the behavior of the minimal exponent under restriction to a smooth hypersurface $H$ in $X$,
containing $P$. When $Y$ has an isolated singularity at $P$, Teissier introduced and studied in \cite{Teissier2} the invariant $\theta_P(f)$
defined as
\begin{equation}\label{eq_theta}
\theta_P(f)=\max_E\frac{{\rm ord}_E(J_f)}{{\rm ord}_E(\frm_P)},
\end{equation}
where $E$ runs over the prime divisors over $X$ with center at $P$, $\frm_P$ is the ideal defining $P$ in $X$, and $J_f$ is the Jacobian ideal of $f$. 
Using the description of the integral closure of an ideal in terms of divisorial valuations (see \cite[Chapter~9.6.A]{Lazarsfeld}) one can see that the maximum
in (\ref{eq_theta}) is achieved by a divisor on the normalized blow-up of $X$ along $\frm_P\cdot J_f$; moreover, $\theta_P(f)$ is the minimum of the positive rational numbers
$\tfrac{r}{s}$ with the property that $\frm_P^r$ is contained in the integral closure $\overline{J_f^s}$ of $J_f^s$.
The following is our main result, giving a positive answer to a conjecture of Teissier \cite{Teissier3}.

\begin{thm}\label{thm_main}
Suppose that $n=\dim(X)\geq 2$ and the hypersurface defined by $f$ in $X$ has an isolated singularity at $P$. If $H$ is a smooth hypersurface in $X$, with $f\vert_H\neq 0$
and $P\in H$, then
$$\widetilde{\alpha}_P(f)\geq\widetilde{\alpha}_P(f\vert_H)+\frac{1}{\theta_P(f)+1}.$$
\end{thm}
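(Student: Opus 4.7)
The plan is to work locally, translate the statement into a comparison of Hodge ideals attached to $f$ and $g = f\vert_H$, and then leverage a finite cyclic cover ramified along $H$ together with a $\mu$-constant deformation.

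After localizing at $P$ with coordinates so that $X = \A^n$, $P = 0$, $H = \{x_n = 0\}$, and $f = g + x_n r$, set $\theta = \theta_P(f)$ and $N = \theta + 1$. The valuative formula defining $\theta$ gives the containment $\frm_P^{\lceil k\theta\rceil}\subseteq \overline{J_f^k}$ for every $k \geq 1$, which will control how $x_n$-perturbations can be absorbed into Jacobian data. As a sanity check, when $f$ is homogeneous of degree $d$ with isolated singularity at $P$ one has $\theta = d - 1$ and the conjecture becomes a numerical equality with the constant $1/N = 1/d$, so the sharp bound should emerge from a comparison with a weighted-homogeneous model. Using the Musta\c{t}\u{a}-Popa characterization of minimal exponents via Hodge ideals, writing $\widetilde{\alpha}_P(g) = k + \lambda$ with $k \in \Z_{\geq 0}$ and $\lambda \in (0,1]$, the hypothesis and the desired conclusion become, respectively, a membership of $1$ in a Hodge ideal attached to $g$ at $P$ and a strengthened membership, gained by $1/N$, for $f$.

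The two new ingredients advertised in the abstract enter as follows. First, I would introduce the cyclic cover $\pi\colon X' \to X$ locally given by $(y',y_n) \mapsto (y',y_n^N)$, and apply the Hodge-ideal transformation formula under finite maps to relate $I_\bullet(\pi^{-1}(Y))$ on $X'$ to $I_\bullet(Y)$ on $X$, with correction terms controlled via $\frm_P^{\lceil k\theta\rceil}\subseteq \overline{J_f^k}$; the gain of $1/N$ arises as the ramification index of $\pi$ along $H$. Second, I would interpolate $\pi^* f$ with a simpler isolated-singularity model, for which the sharpened Hodge-ideal membership can be verified directly from the hypothesis on $g$, through a one-parameter deformation $F_t$; the choice $N = \theta + 1$ is precisely what makes this family $\mu$-constant via a Teissier-type Jacobian criterion. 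Constancy of the relevant Hodge ideals in $\mu$-constant families then propagates the sharpened membership from the model to $\pi^*f$, and hence by pushforward along $\pi$ to $f$ itself.

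The main obstacle is establishing the two new inputs sharply enough to yield the exact constant $1/(\theta + 1)$. The finite-map transformation is delicate, because Hodge ideals are not pulled back functorially along ramified maps and the defect must be tracked precisely in ramification and Jacobian terms; constancy of Hodge ideals in a $\mu$-constant family is a nontrivial refinement of the L\^e-Ramanujam phenomenon at the Hodge-theoretic level, since the Bernstein-Sato polynomial itself can jump in such families. Once these two inputs are in place, the outer structure of the argument follows the strategies of Loeser and Elduque-Musta\c{t}\u{a}, but with the sharper quantitative control yielding Teissier's conjectured constant.
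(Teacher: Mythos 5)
Your toolkit is the right one --- a finite cover ramified along $H$, the behavior of Hodge ideals under finite maps, and constancy of Hodge ideals in $\mu$-constant families are exactly the new ingredients --- but the mechanism you propose for extracting the sharp constant $1/(\theta_P(f)+1)$ does not work as stated. First, $\theta=\theta_P(f)$ is in general only a rational number, so the cyclic cover $(y',y_n)\mapsto (y',y_n^{N})$ with $N=\theta+1$ does not exist; rounding $N$ up merely reproduces Loeser's bound with $\lceil\theta\rceil$, which is precisely the loss the theorem is meant to avoid. The actual argument takes a cover of degree $d$ where $m=d(\theta+1)\in\Z$, and the gain of $1/(\theta+1)$ is \emph{not} the reciprocal of the ramification index (that would be $1/d$, and if the gain really were the reciprocal of the covering degree one could make it arbitrarily large by taking high covers). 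Instead, the gain and the relative canonical correction arise together from a Thom--Sebastiani computation for microlocal multiplier ideals applied to the auxiliary model $g+zx_n^{m}$: the factor of the microlocal $V$-filtration at exponent $1/(\theta+1)$ of $zx_n^m$ is exactly $(x_n^{d-1})=\cO(-K)$ for the degree-$d$ cover, and this is what feeds into the finite-map theorem. Relatedly, the containment $\frm_P^{\lceil k\theta\rceil}\subseteq\overline{J_f^{\,k}}$ is not used to control ``correction terms'' in a transformation formula; its only role is to guarantee $\mu$-constancy of $f(x',x_n^d)+c\,x_n^{m}$ for small $c$. One also needs the comparison $I_p(f^{\alpha})=\widetilde{V}^{p+\alpha}\cO_X$ in the range $p+\alpha\le\widetilde{\alpha}_P(f)+1$, since Thom--Sebastiani is available for microlocal multiplier ideals rather than for Hodge ideals directly.

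Second, your single one-parameter $\mu$-constant family $F_t$ joining the simple model to $\pi^*f$ does not exist in general, so the propagation step collapses. Already for $f=x_1^3+x_n^2$ (so $g=x_1^3$, $\theta=2$) the model $x_1^3+x_n^3$ has Milnor number $4$ while $f$ has Milnor number $2$: the interpolating family is $\mu$-constant only away from the model end, never across it. This is exactly why Loeser needed Steenbrink's semicontinuity to bridge the model end, and why in the present setting one must work with a two-parameter family $f(x_1,\dots,x_{n-1},yx_n^d)+zx_n^{m}$, use the Restriction Theorem for Hodge ideals (which gives containment always but equality only for \emph{generic} parameter values), and exploit a weighted-homogeneity/grading argument to convert the membership $x_n^{d-1}\in I_p$ at the model locus $y=0$ into a unit multiple of $x_n^{d-1}$ in the ideal of the total family, hence membership on a dense open set of parameters. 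Only after that transfer does the $\mu$-constancy near $\pi^*f$ (where Teissier's containment enters) and the resulting flatness/constancy of the relevant Hodge ideals pin the membership down at $\pi^*f$ itself, after which the finite-map theorem yields $1\in I_p(f^{\gamma})$. Without a substitute for this semicontinuity-type bridge, your outline cannot move the sharpened membership from the model to $\pi^*f$.
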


By successively applying the theorem for general hyperplane sections (which automatically have isolated singularities), we obtain the following:

\begin{cor}\label{cor_thm_main}
If  the hypersurface $Y$ defined by $f$ in $X$ has an isolated singularity at $P$ and if $H_1,\ldots,H_{n-1}$ are general smooth hypersurfaces
in $X$, containing $P$, then
$$\widetilde{\alpha}_P(f)\geq\frac{1}{\theta_P(f)+1}+\frac{1}{\theta_P(f\vert_{H_1})+1}+\ldots+\frac{1}{\theta_P(f\vert_{H_1\cap\ldots\cap H_{n-1}})+1}.$$
\end{cor}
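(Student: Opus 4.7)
The plan is to iterate Theorem \ref{thm_main} along a flag of generic smooth hypersurface sections through $P$, then close with a direct computation on the resulting smooth curve. Set $X_0 := X$ and $X_i := H_1 \cap \cdots \cap H_i$ for $1 \leq i \leq n-1$. I would apply Theorem \ref{thm_main} inside each $X_{i-1}$ (of dimension $n-i+1 \geq 2$ for $i \leq n-1$) to the smooth hypersurface $X_i \subset X_{i-1}$.

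The first step is to verify that for generic $H_1, \ldots, H_{n-1}$, every $X_i$ is smooth of dimension $n-i$, contains $P$, and $f|_{X_i}$ is nonzero with isolated singularity at $P$. Smoothness and dimension count are standard Bertini-type assertions. The key point is persistence of isolated singularities: inductively, if $f|_{X_{i-1}}$ has an isolated singularity at $P$, then $d(f|_{X_{i-1}})$ is nonvanishing on a punctured neighborhood of $P$, so the locus of $H_i$ through $P$ that introduces a new singular point of $f|_{X_i}$ near $P$ is a nontrivial closed condition, which a generic $H_i$ avoids. Once the iteration is validated, Theorem \ref{thm_main} yields, for each $i = 1, \ldots, n-1$,
$$\widetilde{\alpha}_P(f|_{X_{i-1}}) \geq \widetilde{\alpha}_P(f|_{X_i}) + \frac{1}{\theta_P(f|_{X_{i-1}}) + 1},$$
and summing telescopes the left-hand sides to
$$\widetilde{\alpha}_P(f) \geq \widetilde{\alpha}_P(f|_{X_{n-1}}) + \sum_{i=0}^{n-2} \frac{1}{\theta_P(f|_{X_i}) + 1}.$$

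To supply the missing term, I would handle $\widetilde{\alpha}_P(f|_{X_{n-1}})$ on the smooth curve $X_{n-1}$ by direct computation. With local parameter $t$ at $P$ and $m := \mult_P(f|_{X_{n-1}})$, one has $f|_{X_{n-1}} = u \cdot t^m$ for a unit $u$; the reduced Bernstein--Sato polynomial of $t^m$ has roots $-j/m$ for $1 \leq j \leq m-1$, giving $\widetilde{\alpha}_P(f|_{X_{n-1}}) = 1/m$. Meanwhile the Jacobian ideal is $(t^{m-1})$ and $\frm_P = (t)$, so $\theta_P(f|_{X_{n-1}}) = m-1$, and this term equals $1/(\theta_P(f|_{X_{n-1}}) + 1)$, exactly the missing summand. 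The main obstacle I would want to pin down carefully is the Bertini persistence statement above, making sure the parameter space of admissible $H_1, \ldots, H_{n-1}$ is simultaneously open and nonempty; the rest is a telescoping sum plus the short local computation on the curve.
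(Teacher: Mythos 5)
Your argument is correct and is essentially the paper's own proof: iterate Theorem~\ref{thm_main} along the general sections $X_i=H_1\cap\cdots\cap H_i$ (with the standard Bertini-type check that each $f\vert_{X_i}$ keeps an isolated singularity at $P$), telescope, and then note that on the smooth curve $X_{n-1}$ one has $\widetilde{\alpha}_P(f\vert_{X_{n-1}})=1/m=1/(\theta_P(f\vert_{X_{n-1}})+1)$ with $m=\mult_P(f\vert_{X_{n-1}})$. Your extra computation via the Bernstein--Sato polynomial of $t^m$ just makes explicit what the paper asserts directly.
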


The inequality in Theorem~\ref{thm_main} was proved by Loeser \cite{Loeser}, with $\theta_P(f)$ replaced by its round-up $\lceil\theta_P(f)\rceil$
(that is, the smallest integer that is $\geq\theta_P(f)$).
Assuming that $f\in\C[x_1,\ldots,x_n]$, $P=0$, and $H$ is the hyperplane given by $x_n=0$, the argument in \cite{Loeser} made use of the family of hypersurfaces 
$$h_t(x_1,\ldots,x_n)=f(x_1,\ldots,x_{n-1},tx_n)+(1-t)x_n^{\theta+1},$$
where $\theta=\lceil \theta_P(f)\rceil$. 
Note that $\widetilde{\alpha}_0(h_1)=\widetilde{\alpha}_0(f)$ and $\widetilde{\alpha}_0(h_0)=\widetilde{\alpha}_0(f\vert_H)+\frac{1}{\theta+1}$
by the Thom-Sebastiani property of Arnold exponents (see for example \cite[Example~(6.8)]{Malgrange}). The definition of $\theta_0(f)$ implies that 
the Milnor number of $h_t$ at $0$ is constant in a neighborhood $V$ of $1$, hence by a result of Varchenko \cite{Varchenko2} it follows that
$\widetilde{\alpha}_0(h_t)$ is constant on $V$. Finally, by the semicontinuity of the Arnold exponent \cite[Theorem~2.11]{Steenbrink},
it follows that $\widetilde{\alpha}_0(h_t)\geq \widetilde{\alpha}_0(h_0)$ for $t\in V$, and we conclude that 
$\widetilde{\alpha}_0(f)\geq\widetilde{\alpha}_0(f\vert_H)+\frac{1}{\theta+1}$. 

This argument was modified in \cite{EM} to prove the weaker version of Theorem~1 for log canonical thresholds. The idea was to consider
the same family $(h_t)_{t\in\C}$, with $\theta=\theta_P(f)$. In order to make sense of this, one pulls-back this expression by the finite cover
given by $\pi(x_1,\ldots,x_n)=(x_1,\ldots,x_{n-1},x_n^d)$, where $d$ is a divisible enough positive integer. Recall that the log canonical threshold
is characterized by the triviality of certain invariants associated to $f$, the 
\emph{multiplier ideals}, see \cite[Chapter~9]{Lazarsfeld}; due to the presence of the finite cover $\pi$, the argument in 
\cite{EM} relied  on considering whether the equation $x_n^{d-1}$ (defining the 
relative canonical divisor of $\pi$) lies in a suitable multiplier ideal
$h_t\circ\pi$, by making use of various properties of multiplier ideals.

In this note we follow the same approach. Since we deal with the minimal exponent, we need to make use of more refined invariants,
the \emph{Hodge ideals} $I_p(f^{\lambda})$ introduced and studied in \cite{MP1}. 
The definition of these invariants (as well as the proofs of their basic properties) makes use of Saito's theory of mixed Hodge modules \cite{Saito-MHM}. It was shown in \cite{Saito-MLCT} and \cite{MP2} that
in the same way that triviality of multiplier ideals characterizes log canonical thresholds, triviality  of Hodge ideals characterizes minimal exponents. 
In order to extend the approach in \cite{EM} to the setting of Hodge ideals, we need two new properties of these invariants, that are of independent interest.

\begin{thm}\label{thm2}
Let $\pi\colon Y\to X$ be a finite surjective morphism between smooth varieties and let $K_{Y/X}$ be
the effective divisor on $Y$ defined by the determinant of the Jacobian matrix of $\pi$.
If $0\neq f\in\cO_X(X)$ and $g=\pi^*(f)$ both define
reduced divisors, then for every $h\in\cO_X(X)$, every nonnegative integer $p$, and every
 $\lambda\in\Q_{>0}$, if $\pi^*(h)\cdot\cO_Y(-K_{Y/X})\subseteq I_p(g^{\lambda})$, then $h\in I_p(f^{\lambda})$.
\end{thm}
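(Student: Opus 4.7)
The plan is to reinterpret the containment in terms of Saito's theory of mixed Hodge modules and to exploit the trace morphism for the finite map $\pi$. Recall from \cite{MP1} that $I_p(f^\lambda)$ is defined so as to measure the gap between the $p$-th step $F_p$ of the Hodge filtration on the filtered $\cD_X$-module $\cO_X(*f)\,f^{\lambda}$ (viewed as a mixed Hodge module) and the ``naive'' pole-order filtration, and the analogous description holds for $I_p(g^\lambda)$ on $Y$. Fixing a local generator $J_\pi$ of the invertible ideal sheaf $\cO_Y(-K_{Y/X})$---that is, the Jacobian determinant of $\pi$---the hypothesis $\pi^*(h)\cdot \cO_Y(-K_{Y/X})\subseteq I_p(g^\lambda)$ translates, locally, to $\pi^*(h)\cdot J_\pi \in I_p(g^\lambda)$. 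This is the Hodge-theoretic analogue of the classical descent property for multiplier ideals under finite maps, and that analogy dictates the strategy below.

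The key step is to construct a trace morphism of filtered $\cD$-modules. For the finite map $\pi$ between smooth varieties of equal dimension, the $\cD$-module push-forward $\pi_+$ is computed by twisting $\pi_*$ by the relative dualizing sheaf $\omega_{Y/X}\simeq \cO_Y(K_{Y/X})$, and Grothendieck's trace $\pi_*\omega_Y\to \omega_X$ upgrades, in Saito's category of mixed Hodge modules, to a morphism
$$\mathrm{Tr}_\pi\colon \pi_+\bigl(\cO_Y(*g)\,g^{\lambda}\bigr) \longrightarrow \cO_X(*f)\,f^{\lambda}.$$
Unwinding the twist in $\pi_+$, this takes the shape
$$\pi_*\bigl(\omega_{Y/X}\otimes \cO_Y(*g)\,g^{\lambda}\bigr) \longrightarrow \cO_X(*f)\,f^{\lambda},$$
which on local sections sends $J_\pi\cdot \pi^*(u)$ to $(\deg \pi)\cdot u$ generically and is $\cO_X$-linear with respect to the natural inclusion $\cO_X\hookrightarrow \pi_*\cO_Y$. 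The crucial property is that $\mathrm{Tr}_\pi$ is \emph{strict} with respect to the Hodge filtrations, by Saito's strictness theorem applied to the (automatically proper) finite push-forward of mixed Hodge modules.

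Putting the pieces together, the hypothesis implies that $\pi^*(h)\cdot J_\pi$, viewed as a section of the source of $\mathrm{Tr}_\pi$ (after transferring $J_\pi$ through the $\omega_{Y/X}$-twist), lies in $F_p$. Applying $\mathrm{Tr}_\pi$ and invoking strictness yields that $h$ lies in $F_p$ of $\cO_X(*f)\,f^{\lambda}$, and by the very definition of the Hodge ideal this means $h\in I_p(f^\lambda)$.

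The main obstacle will be verifying that $\mathrm{Tr}_\pi$ is genuinely a morphism of mixed Hodge modules---not merely of the underlying $\cD$-modules---and in particular that it is compatible with the $f^\lambda$-twists on both sides. Closely related is the careful bookkeeping of the twists by $\omega_{Y/X}$, $\cO_X(f)$, and $\cO_Y(g)$ entering the identification of Hodge ideals with filtration pieces. When $\lambda$ is not an integer, one further needs to check that $g=\pi^*f$ induces the expected compatibility of the $V$-filtrations on the graph embeddings of $f$ and $g$; the reducedness of the divisors of $f$ and $g$ is what ensures that only simple poles appear in the relevant $V$-filtration steps.
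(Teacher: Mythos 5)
Your overall strategy---push the hypothesis into the Hodge filtration of the direct image and then come back down to $X$---could be made to work, but the morphism you use to come back down is precisely what you have not constructed, and it is where the real content of the theorem lies. You posit a trace $\Tr_\pi\colon \pi_+\cM_r(g^{-\lambda})\to\cM_r(f^{-\lambda})$ (in right $\cD$-module terms) that is a morphism of mixed Hodge modules compatible with the fractional twists; you flag this yourself as ``the main obstacle,'' and nothing in the proposal resolves it. For non-integral $\lambda$ the modules $\cM(f^{-\lambda})$ and $\cM(g^{-\lambda})$ only arise as eigensummands of Hodge modules pushed forward from cyclic covers, so compatibility of a Grothendieck-type trace with the twists is not a formal consequence of the trace $\pi_*\omega_Y\to\omega_X$; one must actually build the map, e.g.\ by running the cyclic-cover construction on both $X$ and $Y$ and taking $\Z/m\Z$-eigenspaces, or by dualizing a unit map. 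The paper avoids producing a trace altogether by going in the opposite direction: Lemma~\ref{lem2_finite_maps} constructs the unit-type morphism $\tau\colon\cM_r(f^{-\alpha})\to\pi_+\cM_r(g^{-\alpha})$ as an eigensummand of $i_+p_+$ applied to the canonical map $\Q_{X''}^H[n]\to\psi_+\Q_{Y''}^H[n]$ on the covers, checks that $\tau$ is injective, and then uses strictness (automatic for any morphism of mixed Hodge modules): $\tau(u)\in F_k$ forces $u\in F_k$ (Theorem~\ref{thm2_v2}). Note also that in your trace direction strictness is not the relevant property at all---you apply $\Tr_\pi$ to an element already known to lie in $F_k$ of the source, so all you need is that $\Tr_\pi$ preserves the filtration; your appeal to ``Saito's strictness theorem'' as the crucial point misidentifies where the difficulty sits. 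Strictness is essential only for the paper's unit-direction argument.

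A second, smaller gap: the hypothesis gives membership of the section attached to $\pi^*(h)\cdot J_\pi$ in $F_{p-n}\cM_r(g^{-\lambda})$ \emph{on $Y$}, but what you must feed into $\Tr_\pi$ is a section of $\pi_+\cM_r(g^{-\lambda})$ lying in the Hodge filtration of the \emph{direct image}, and that filtration is not $\pi_*$ of the filtration upstairs (nor is $\pi_+$ equal to $\pi_*$ off the \'{e}tale locus). It is computed through the graph factorization $\pi=q\circ\rho$ and the relative Spencer complex, using the formula (\ref{filtration_rho}) and the induced quotient filtration; the statement you need---that the class of $\pi^*(u)\otimes 1$ lies in $F_k(\pi_+\cM_r(g^{-\lambda}))$ whenever $\pi^*(u)\in F_k\cM_r(g^{-\lambda})$---is exactly the computation the paper performs in the proof of Theorem~\ref{thm2_v2}, not a tautology about the $\omega_{Y/X}$-twist. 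If you supply both ingredients (the twisted trace as a morphism of mixed Hodge modules, identified generically so that it sends the class of $\pi^*(u)\otimes 1$ to $(\deg\pi)\,u$, together with the direct-image filtration formula), your argument would give a genuine alternative proof, trading the paper's injectivity-plus-strictness mechanism for a trace that only needs to preserve the filtration; as written, however, the key construction is assumed rather than proved.
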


For a more general statement, which does not assume that $f$ and $g$ define reduced divisors, see Theorem~\ref{thm2_v2} below.
We also give a partial converse of this result in the case of a Galois cover (see Theorem~\ref{thm3}). 
At least for such covers, we thus have an extension of the formula in \cite[Theorem~9.5.42]{Lazarsfeld}
relating multiplier ideals under finite maps.

The next result is concerned with certain Hodge ideals associated to families of hypersurfaces with constant Milnor number.
Let $\varphi\colon {\mathcal X}\to T$ be a smooth morphism of smooth complex algebraic varieties (in particular, $T$ is connected), and let $s\colon T\to {\mathcal X}$
be a section of $\varphi$. Let $f\in\cO_{\mathcal X}(\mathcal X)$ be such that $f\circ s=0$ and for every $t\in T$, the restriction $f_t$ to ${\mathcal X}_t=\varphi^{-1}(t)$
is nonzero. We assume that for every $t\in T$, the hypersurface defined by $f_t$ in ${\mathcal X}_t$ is reduced, has at most one singular point at $s(t)$,
and furthermore,
that the Milnor number of this hypersurface at $s(t)$ is independent of $t \in T$ (note that the condition to be reduced is a consequence of isolated singularities
as long as the relative dimension is at least 2). In this case, a result of Varchenko \cite{Varchenko2} says that the spectrum of 
$f_t$ at $s(t)$ is independent of $t\in T$; in particular, the minimal exponent $\widetilde{\alpha}_{s(t)}(f_t)$ is independent of $t\in T$.

\begin{thm}\label{thm4}
With the above notation, if $\alpha=\widetilde{\alpha}_{s(t)}(f_t)$ for $t\in T$, then for every nonnegative integer $p$ and every $\gamma\in\Q\cap (0,1]$
with $p+\gamma \leq\alpha+1$, the subscheme of ${\mathcal X}$ defined by $I_p(f^{\gamma})$ is 
finite and flat over $T$ (possibly empty). Moreover, for every $t\in T$, we have
$$I_p(f_t^{\gamma})=I_p(f^{\gamma})\cdot\cO_{{\mathcal X}_t}.$$
\end{thm}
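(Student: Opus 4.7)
The plan is to reduce both assertions to the restriction formula
\[
I_p(f^\gamma) \cdot \mathcal{O}_{\mathcal{X}_t} = I_p(f_t^\gamma) \quad \text{for every } t \in T,
\]
from which the finiteness and flatness will follow via support considerations combined with Varchenko's constant spectrum theorem and the standard criterion identifying coherent sheaves of locally constant fiber dimension on an integral base with locally free sheaves.

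First I would verify that the subscheme $Z := V(I_p(f^\gamma)) \subseteq \mathcal{X}$ is set-theoretically contained in $s(T)$. For $x \in \mathcal{X} \setminus s(T)$, either $f(x) \neq 0$, or (using that $\varphi$ is smooth and that the only singular point of $V(f_{\varphi(x)})$ is $s(\varphi(x))$) the point $x$ is a smooth point of $V(f)$; in either case $\widetilde{\alpha}_x(f) = +\infty$, so the triviality criterion for Hodge ideals in terms of the minimal exponent, established in \cite{MP2}, yields $I_p(f^\gamma)_x = \mathcal{O}_{\mathcal{X},x}$. Consequently, $\varphi|_Z \colon Z \to T$ is finite.

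For the restriction formula, fix $t_0 \in T$ and work locally near $s(t_0)$. Choosing local coordinates on $T$ and iterating, it suffices to treat the case $\dim T = 1$, so $\mathcal{X}_{t_0}$ is a smooth hypersurface in $\mathcal{X}$. The easier inclusion follows from the general restriction theorem for Hodge ideals proved in \cite{MP1}. The reverse inclusion is the core of the argument: here I would use Saito's $V$-filtration description of Hodge ideals, which reads $I_p(f^\gamma)$ off the Hodge-filtered piece $F_p$ and the $V$-filtration of the $D_{\mathcal{X}}$-module $\iota_+\mathcal{O}_{\mathcal{X}}(*Y)$ along the graph embedding $\iota$ of $f$. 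The hypothesis of constant Milnor number and Varchenko's theorem cited in the statement imply that the Hodge numbers of the nearby cycles of $f$, or equivalently the dimensions of the relevant graded pieces $\mathrm{gr}_V^\beta \iota_+\mathcal{O}_{\mathcal{X}}(*Y)$ at $s(t)$, are independent of $t$. A standard argument for variations of mixed Hodge structure then forces the corresponding filtered coherent sheaves on $\mathcal{X}$ to be flat over $\mathcal{O}_T$ and to commute with base change to the fibers, giving the desired equality of ideals.

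With the restriction formula established, $\varphi_*\mathcal{O}_Z$ is a coherent $\mathcal{O}_T$-module whose fiber over $t$ is $\mathcal{O}_{\mathcal{X}_t}/I_p(f_t^\gamma)$. The length of this fiber at $s(t)$ is expressible in terms of the spectrum of $f_t$ (see \cite{MP2}), and Varchenko's theorem then gives that this fiber dimension is independent of $t$. Since $T$ is integral, this forces $\varphi_*\mathcal{O}_Z$ to be locally free, i.e.\ $Z \to T$ is flat. The main obstacle I foresee is the reverse inclusion in the restriction step: upgrading the rank-level data provided by $\mu$-constancy to the scheme-theoretic equality of Hodge ideals. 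I expect this to require a careful analysis of how both the Hodge and $V$-filtrations on $\iota_+\mathcal{O}_{\mathcal{X}}(*Y)$ restrict to the fibers $\mathcal{X}_t$ in the $\mu$-constant setting, relying essentially on the variation of mixed Hodge structure carried by the vanishing cycles along $s(T)$.
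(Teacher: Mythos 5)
Your support/finiteness step and your final flatness step are fine, but the core of your argument -- the reverse inclusion $I_p(f^{\gamma})\cdot\cO_{{\mathcal X}_t}\subseteq I_p(f_t^{\gamma})$ for \emph{every} $t$ -- is not actually proved. You appeal to ``a standard argument for variations of mixed Hodge structure'' forcing the filtered pieces of $\iota_+\cO_{\mathcal X}(*Y)$ to be flat over $\cO_T$ and to commute with base change, but no such base-change statement exists at special fibers: this is precisely why the Restriction Theorem of \cite{MP1} only asserts the inclusion $I_p(f_t^{\gamma})\subseteq I_p(f^{\gamma})\cdot\cO_{{\mathcal X}_t}$ in general, with equality only for \emph{general} $t$, and equality can genuinely fail without extra hypotheses. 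Moreover, Varchenko's theorem gives constancy of the spectrum of the functions $f_t$ on the fibers ${\mathcal X}_t$; identifying this with ``Hodge numbers of the nearby cycles of $f$'' on the total space, or with dimensions of $\mathrm{gr}_V^{\beta}$ of the graph embedding of $f$, is itself a nontrivial passage that you do not justify. Note also that the hypothesis $p+\gamma\leq\alpha+1$ plays no visible role in your argument, yet it is essential: it is what guarantees (via Proposition~\ref{eq_Hodge_microlocal}) that $I_p(f_t^{\gamma})=\widetilde{V}^{p+\gamma}\cO_{{\mathcal X}_t}(f_t)\supseteq J_{f_t}$, and only then does the result of Jung--Kim--Yoon--Saito \cite{Saito_et_al} (not \cite{MP2}, as you cite) identify the colength of $I_p(f_t^{\gamma})$ with a partial sum of spectrum multiplicities.

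The paper avoids any direct base-change claim by running a purely numerical argument, and your own Step~3 already contains the needed germ: first show $\dim_{\C}\big(\cO_{{\mathcal X}_t}/I_p(f_t^{\gamma})\big)$ is independent of $t$ (Proposition~\ref{eq_Hodge_microlocal} plus \cite{Saito_et_al} plus Varchenko); then use the Restriction Theorem in both of its parts, the inclusion for all $t$ and equality for general $t$; finally, upper semicontinuity of $t\mapsto\dim_{\C}\big(\cO_{{\mathcal X}_t}/I_p(f^{\gamma})\cO_{{\mathcal X}_t}\big)$ for the finite morphism $\tau\colon{\mathcal Z}\to T$ forces this dimension to be constant, which simultaneously yields flatness and upgrades the inclusion to equality at every $t$ (a surjection between quotients of equal finite dimension is an isomorphism). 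In other words, the equality at special $t$ should be a \emph{consequence} of the length constancy together with the one-sided restriction inclusion, not an input established by a separate VMHS argument; as written, your Step~2 is the gap.
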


We deduce this result from the above-mentioned result of Varchenko on the constancy of the spectrum and a result due to Jung, Kim, Yoon, and Saito
\cite{Saito_et_al} that allows us to relate the Hodge ideals satisfying the condition in the theorem with the Hodge filtration in Steenbrink's mixed Hodge structure on the cohomology of the Milnor fiber.
Finally, for the proof of Theorem~\ref{thm_main} we also need the Restriction Theorem for Hodge ideals from \cite{MP1}, as well as 
a version of the Thom-Sebastiani property for certain Hodge ideals.
Regarding the latter property, 
in our setting it is enough to use a Thom-Sebastiani type result
for some related ideals, Saito's \emph{microlocal multiplier ideals}; this property was proved by Maxim, Saito, and Sch\"{u}rmann in \cite{MSS}.

Our main result gives a lower bound for the difference between the minimal exponent of $f$ and 
 the minimal exponent of a restriction of $f$ to a smooth hypersurface, in terms of
Teissier's invariant $\theta_P(f)$. Our last result is an upper bound for the same difference in terms of multiplicity, when we restrict to a
\emph{general} hypersurface. We note that in this result we do not require isolated singularities.

\begin{thm}\label{thm5}
Let $X$ be a smooth complex algebraic variety with ${\rm dim}(X)\geq 2$, $f\in\cO_X(X)$ nonzero, and $P\in X$ such that $f(P)=0$. If $H$ is a general hypersurface in $X$ containing $P$,
then
$$\widetilde{\alpha}_P(f\vert_H)\geq \widetilde{\alpha}_P(f)-\frac{1}{{\rm mult}_P(f)}.$$
\end{thm}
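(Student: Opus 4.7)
The plan is to adapt Loeser's deformation argument from the isolated-singularity case to the non-isolated setting using Hodge ideals. Working locally near $P$ in coordinates $(x_1,\ldots,x_n)$ with $P = 0$, I would, for a general $H$, take $H = \{x_n = 0\}$ with $x_n$ a general linear form, and set $m := \mathrm{mult}_P(f)$. The key object is the one-parameter family
\[
h_t(x) := (1-t)\, f(x) + t\bigl(f|_H(x_1,\ldots,x_{n-1}) + x_n^m\bigr), \qquad t \in \A^1,
\]
which interpolates between $h_0 = f$ and $h_1 = f|_H + x_n^m$.

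First I would compute $\widetilde{\alpha}_P(h_1)$ using the Thom-Sebastiani formula for microlocal multiplier ideals (\cite{MSS}) cited in the introduction. Since $x_n$ is a variable separate from those of $f|_H$, this yields
\[
\widetilde{\alpha}_P(h_1) = \widetilde{\alpha}_P(f|_H) + \frac{1}{m}.
\]
Next I would invoke lower semicontinuity of the minimal exponent in one-parameter families -- a consequence of the upper semicontinuity of Hodge ideals: there is a dense open $U \subseteq \A^1$ on which $\widetilde{\alpha}_P(h_t)$ attains a constant value $\alpha_{\mathrm{gen}} \geq \widetilde{\alpha}_P(h_0) = \widetilde{\alpha}_P(f)$. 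If one shows that $1 \in U$, then $\widetilde{\alpha}_P(f|_H) + \frac{1}{m} = \alpha_{\mathrm{gen}} \geq \widetilde{\alpha}_P(f)$, which is the claimed inequality.

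The main obstacle will be showing $1 \in U$, i.e., that $t = 1$ is a generic value for $\widetilde{\alpha}_P(h_t)$. For $H$ general, a Bertini-type argument ensures that $f|_H$ has at most isolated singularity at $P$, so $h_1 = f|_H + x_n^m$ has isolated singularity at $P$, and hence so does $h_t$ for $t$ in an analytic neighborhood $V$ of $1$. On $V$, the Milnor number $\mu_P(h_t)$ is upper semicontinuous and attains its minimum on a dense open, and for $x_n$ very general one can arrange (via Newton-polyhedron analysis of $h_t$) that this generic minimum equals $\mu_P(h_1)$. Granted this, Theorem~\ref{thm4} -- equivalently, Varchenko's theorem on constancy of the spectrum -- yields local constancy of $\widetilde{\alpha}_P(h_t)$ on a neighborhood of $1$, so $1 \in U$.

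The hardest technical point will be bridging the regime near $t = 0$, where $f$ may have non-isolated singularity and the hypotheses of Theorem~\ref{thm4} do not directly apply, with the neighborhood $V$ of $t = 1$ where they do. Concretely, one must argue that the open $U$ coming from lower semicontinuity at $t = 0$ and the constant-$\widetilde{\alpha}$ open around $t = 1$ agree generically on $\A^1$, so that their intersection is nonempty; this is where the general-position hypothesis on $H$ enters decisively.
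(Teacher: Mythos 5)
Your outline reproduces the right general shape of Loeser's argument (Thom--Sebastiani at one endpoint of a family, semicontinuity at the other, Milnor-number constancy to transfer the value), but there are two genuine gaps. The first is the reduction of the non-isolated case: you rest it on the claim that for $H$ general through $P$ the restriction $f\vert_H$ has at most an isolated singularity at $P$. This is false whenever the singular locus of the hypersurface defined by $f$ has dimension at least $2$ at $P$; for instance, for $f=x_1^2+x_2^3$ in $\A^4$, every hyperplane $H$ through $P=0$ meets $\{x_1=x_2=0\}$ in a curve through $P$, and all of its points are singular points of $f\vert_H$. Consequently your endpoint $h_1=f\vert_H+x_n^m$ need not have an isolated singularity, the Milnor number is undefined, and neither Theorem~\ref{thm4} nor Varchenko's theorem applies near $t=1$, so the ``bridge'' you flag in your last paragraph cannot be built this way. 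The paper's mechanism for the general case is different and is missing from your proposal: one perturbs $f$ itself, setting $f_m=f+\sum_{i=1}^n a_{i,m}x_i^m$ with general coefficients and $m>d=\mathrm{mult}_P(f)$, so that $f_m$ has an isolated singularity at $P$ and the same multiplicity $d$; one applies the isolated-singularity case to every $f_m$ with a single $H$ chosen in the countable intersection of the corresponding open sets, and then lets $m\to\infty$, using $\lim_{m\to\infty}\widetilde{\alpha}_P(f_m)=\widetilde{\alpha}_P(f)$ and the analogous limit for the restrictions \cite[Proposition~6.6(3)]{MP2}.

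The second gap is in the isolated case itself: the constancy of the Milnor number of $h_t$ near the Thom--Sebastiani endpoint is exactly the hard point, and you assert it (``Newton-polyhedron analysis'', ``Bertini-type'') without an argument. This is precisely where the genericity of $H$ is used in the paper, following Loeser: with the family $h_t=f(x_1,\ldots,x_{n-1},tx_n)+(1-t)x_n^d$, a change of variables reduces the computation of $\mu_t$ for general $t$ to an equality of Hilbert--Samuel multiplicities on the curve $\Gamma$ cut out by $\partial f/\partial x_1,\ldots,\partial f/\partial x_{n-1}$, which is proved by combining Teissier's theorem that $J_{f\vert_H}$ and $J_f\cdot\cO_{H,0}$ have the same integral closure for general $H$, a mixed-multiplicity identity \cite{Teissier1}, and Rees' theorem \cite{Rees}; this yields $\mu_t=(d-1)\mu_0(f\vert_H)=\mu_0(h_0)$, hence constancy near the endpoint. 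Your linear interpolation $(1-t)f+t(f\vert_H+x_n^m)$ (note $h_t=f\vert_H+(1-t)x_nq+tx_n^m$ if $f=f\vert_H+x_nq$) does not obviously admit such a computation, and in any case no proof of the constancy is given, so the key step of the argument remains open.
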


The case when $f$ has an isolated singularity at $P$ is a consequence of a stronger bound proved by Loeser in \cite{Loeser}. 
We deduce the general case from this one by making use of a result from \cite{MP2}. As a consequence of 
Theorem~\ref{thm5} is that we get
conditions, in terms of the minimal exponent $\widetilde{\alpha}_P(f)$, that guarantee that successive general hyperplane sections through $P$
have rational singularities (see Corollary~\ref{cor_thm5}). Another application concerns a characterization of singular points with maximal 
minimal exponent (see Corollary~\ref{cor_max_min_exp}). 

The paper is structured as follows. In the next section we review briefly the Hodge ideals and the microlocal multiplier ideals, the connection between them,
and the corresponding characterization of minimal exponents. In Section 3 we discuss the behavior of Hodge ideals under finite maps and prove
Theorem~\ref{thm2}, as well as its partial converse in the case of Galois finite covers. In Section 4 we recall the relevant result from \cite{Saito_et_al}
and use it to relate certain jumping numbers for Hodge ideals to the spectrum. In particular, we prove Theorem~\ref{thm4}. We combine these results
to give the proof of Theorem~\ref{thm_main} in Section 5. The last section of the paper is devoted to the proof of the bound in the opposite direction in
Theorem~\ref{thm5} above and of the above-mentioned applications.

\subsection{Acknowledgements} We are grateful to Eva Elduque for many discussions related to this project. We would like to thank Sebasti\'{a}n Olano and Jakub Witaszek 
for a conversation that led to Corollary~\ref{cor_max_min_exp}. We are also indebted to the anonymous referees for the careful reading of the paper and for many useful comments.

\section{Hodge ideals and minimal exponents}

In this section we review some basic facts about Hodge ideals and their connection with minimal exponents and microlocal multiplier ideals,
following \cite{MP1} and \cite{MP2}. Let $X$ be a smooth $n$-dimensional complex algebraic variety and $f\in\cO_X(X)$ a nonzero regular function. 
We denote by $\cD_X$ the sheaf of differential operators on $X$.

For every positive rational number $\alpha$, we have a left $\cD_X$-module 
$$\cM(f^{-\alpha}):=\cO_X[1/f]f^{-\alpha}.$$
This is a free module of rank 1 over the sheaf $\cO_X[1/f]$, generated by the element $f^{-\alpha}$, with differential operators acting in the 
expected way: if $D$ is a derivation on $\cO_X$, then
$$D\cdot (hf^{-\alpha})=D(h)f^{-\alpha}-\alpha\frac{hD(f)}{f}f^{-\alpha}.$$
Since $\cM(f^{-\alpha})$ is a filtered direct summand of a mixed Hodge module in the sense of \cite{Saito-MHM}, it carries a canonical filtration
$F_{\bullet}\cM(f^{-\alpha})$,
compatible with the filtration on $\cD_X$ given by order of differential operators. The Hodge ideals $\big(I_p(f^{\alpha})\big)_{p\geq 0}$ describe this filtration. 

In what follows we will only be interested in the case when $f$ defines a reduced divisor $D$. In this case, the Hodge ideals 
are given by 
$$F_p\cM(f^{-\alpha})=I_p(f^{\alpha})\cdot \cO_X(pD)\cdot f^{-\alpha}$$
(we note that $I_p(f^{\alpha})$ was denoted by $I_p(\alpha D)$ in \cite{MP1}). 

It is sometimes convenient to also consider the right $\cD_X$-module corresponding to $\cM(f^{-\alpha})$.
Recall that there is an equivalence of categories between left and right $\cD_X$-modules such that if
$\cM$ is a left $\cD_X$-module, the $\cO_X$-module underlying the right $\cD_X$-module corresponding to $\cM$
is $\omega_X\otimes_{\cO_X}\cM$; see \cite[Section~1.2]{HTT}. We denote by $\cM_r(f^{-\alpha})$ the right $\cD_X$-module corresponding
to $\cM(f^{-\alpha})$. The filtration on $\cM(f^{-\alpha})$ induces a filtration on $\cM_r(f^{-\alpha})$, with the convention
$$F_{p-n}\cM_r(f^{-\alpha})=\omega_X\otimes_{\cO_X}F_p\cM(f^{-\alpha}).$$

We next recall the $V$-filtration associated to $f$. Let $\iota\colon X\to X\times{\mathbf A}^1$ be the graph embedding given by
$\iota(x)=\big(x,f(x)\big)$. We denote the standard coordinate on ${\mathbf A}^1$ by $t$.
The $\cD$-module theoretic push-forward $B_f:=\iota_+\cO_X$ of $\cO_X$ can be described 
as 
$$B_f=\cO_X\otimes_{\C}\C[\partial_t]$$
(see \cite[Example~1.3.5]{HTT}). 
We thus have an $\cO_X$-basis of $B_f$ given by $\partial_t^j\delta$, for $j\geq 0$, where we put $\delta=1\otimes 1\in B_f$.
The action of $t$ on the elements of this basis is given by
$$t\cdot\partial_t^j\delta=f\partial_t^j\delta-j\partial_t^{j-1}\delta.$$

The $V$-filtration on $B_f$ is a rational filtration $(V^{\gamma}B_f)_{\gamma\in\Q}$ which is exhaustive, decreasing, left-continuous and 
discrete\footnote{More precisely, the last two properties mean that there is a positive integer $\ell$ such that $V^{\gamma}B_f$ is constant
for $\gamma\in \big(i/\ell, (i+1)/\ell\big]$, with $i\in\Z$.}.
The filtration, constructed by Malgrange in \cite{Malgrange3}, is uniquely characterized by the following properties:
\begin{enumerate}
\item[i)] Every $V^{\gamma}B_f$ is a coherent module over $\cD_X[t,\partial_tt]$.
\item[ii)] $t\cdot V^{\gamma}B_f\subseteq V^{\gamma+1}B_f$ for all $\gamma\in\Q$, with equality if $\gamma>0$.
\item[iii)] $\partial_t\cdot V^{\gamma}B_f\subseteq V^{\gamma-1}B_f$ for all $\gamma\in\Q$.
\item[iv)] For every $\gamma\in\Q$, the operator $\partial_tt-\gamma$ on ${\rm Gr}_V^{\gamma}:=V^{\gamma}B_f/V^{>\gamma}B_f$ is nilpotent,
where $V^{>\gamma}B_f=\bigcup_{\gamma'>\gamma}V^{\gamma'}B_f$.
\end{enumerate}

\begin{rmk}\label{rmk0_Vfiltration}
It follows from property iv) above that if $\alpha\neq 1$, then $t\partial_t$ is invertible on ${\rm Gr}_V^{\alpha}$; in particular, $\partial_t\colon
{\rm Gr}_V^{\alpha}\to {\rm Gr}_V^{\alpha-1}$ is injective.
This implies that if $u\in B_f$ is such that $\partial_tu\in V^{>0}B_f$, then $u\in V^{>1}B_f$.
\end{rmk}

In particular, the $V$-filtration on $B_f$ induces a $V$-filtration $(V^{\gamma}\cO_X)_{\gamma\in\Q}$ on $\cO_X$ via the inclusion $\cO_X\hookrightarrow B_f$ that maps
$h$ to $h\delta$. Saito introduced in \cite{Saito_microlocal} a \emph{microlocal} version of the $V$-filtration. 
This in turn induces the \emph{microlocal $V$-filtration} $(\widetilde{V}^{\gamma}\cO_X)_{\gamma\in\Q}$ on $\cO_X$, that we can describe via the usual $V$-filtration, as follows. 
For $\gamma\leq 0$, we have $\widetilde{V}^{\gamma}\cO_X=\cO_X$. If $\gamma>0$, write $\gamma=p+\alpha$, for an integer $p$ and
$\alpha\in (0,1]$ (hence $p=\lceil\alpha\rceil-1$). With this notation, $\widetilde{V}^{\gamma}\cO_X$ consists of those regular functions $h\in \cO_X$,
with the property that there are regular functions $h_0,\ldots,h_{p-1}\in\cO_X$ such that
$$h\partial_t^p\delta+h_{p-1}\partial_t^{p-1}\delta+\ldots+h_0\delta\in V^{\alpha}B_f.$$
Whenever the function $f$ is not clear from the context, we write $\widetilde{V}^{\gamma}\cO_X(f)$ for $\widetilde{V}^{\gamma}\cO_X$.

\begin{rmk}\label{rmk_1_V0}
A basic fact is that $\delta\in V^{>0}B_f$. For example, this follows from Sabbah's description of the $V$-filtration in terms of $b$-functions (see \cite{Sabbah})
and the fact, due to Kashiwara \cite{Kashiwara}, that all the roots of the $b$-function of $f$ are negative rational numbers.
\end{rmk}

\begin{rmk}\label{rmk1_microlocal_filtration}
The microlocal $V$-filtration on $\cO_X$ is a rational, decreasing, exhaustive, left-continuous and discrete filtration by coherent ideals. 
With the above definition, the only assertion that is not clear is that $\widetilde{V}^{\gamma_1}\cO_X\subseteq \widetilde{V}^{\gamma_2}\cO_X$
if $\gamma_1>\gamma_2$. In order to check this, we can easily reduce to the case when $\gamma_2=p$ is a positive integer and 
$\gamma_1=p+\alpha$, for a rational number $\alpha\in (0,1]$. In order to prove the inclusion, 
suppose that $h\in \widetilde{V}^{p+\alpha}\cO_X$, hence there are
$h_0,\ldots,h_{p-1}\in\cO_X$ such that
$$h\partial_t^p\delta+h_{p-1}\partial_t^{p-1}\delta+\ldots+h_0\delta\in V^{\alpha}B_f.$$
Since $h_0\delta\in V^{>0}B_f$ by Remark~\ref{rmk_1_V0}, it follows that if 
$$u=h\partial_t^{p-1}\delta+h_{p-1}\partial_t^{p-2}\delta+\ldots+h_1\delta,$$
then $\partial_tu\in V^{>0}B_f$, hence $u\in V^{>1}B_f\subseteq V^1B_f$ by Remark~\ref{rmk0_Vfiltration}. This implies that $h\in \widetilde{V}^p\cO_X$.
\end{rmk}

\begin{rmk}\label{rmk2_microlocal_filtration}
The \emph{microlocal multiplier ideals} of $f$ are defined by
$$\widetilde{\cJ}(f^{\gamma}):=\widetilde{V}^{\gamma+\epsilon}\cO_X\quad\text{for}\quad {0<\epsilon\ll 1}$$
(see \cite{Saito-MLCT} or \cite{MSS}). The shift in the definition is convenient since it implies that for $\gamma<1$, the microlocal multiplier
ideal $\widetilde{\cJ}(f^{\gamma})$ coincides with the usual multiplier ideal $\cJ(f^{\gamma})$ (this is a consequence of a theorem of Budur
and Saito \cite{BS} relating multiplier ideals to the $V$-filtration on $\cO_X$). In what follows we will not shift by $\epsilon$ since,
as we will see shortly, this indexing matches
the one for Hodge ideals, but we will still refer to the elements of the filtration $(\widetilde{V}^{\gamma}\cO_X)_{\gamma\in\Q}$ as \emph{microlocal multiplier ideals}.
\end{rmk}

The following is the main result relating Hodge ideals and microlocal multiplier ideals. It was proved in \cite[Theorem~1]{Saito-MLCT} for 
$\gamma\in\Z_{>0}$ and in \cite[Theorem~A']{MP2} in general.

\begin{thm}\label{Hodge_vs_microlocal}
If $f$ defines a reduced divisor, then for every $\gamma=p+\alpha$, with $p\in\Z_{\geq 0}$ and $\alpha\in\Q\cap (0,1]$, we have
$$I_p(f^{\alpha})+(f)=\widetilde{V}^{\gamma}\cO_X+(f).$$
\end{thm}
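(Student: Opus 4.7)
The plan is to compare both sides via the graph embedding $\iota \colon X \hookrightarrow X \times {\mathbf A}^1$, $x \mapsto \bigl(x, f(x)\bigr)$, which places the Hodge filtration on $\cM(f^{-\alpha})$ and the $V$-filtration on $B_f$ into a common ambient $\cD_{X \times {\mathbf A}^1}$-module. First I would use the standard direct-image formula for the Hodge filtration under a closed embedding to express $F_p \iota_+ \cM(f^{-\alpha})$ as the sum of pieces of $F_{\bullet}\cM(f^{-\alpha})$ acted on by powers of $\partial_t$. Using $F_p \cM(f^{-\alpha}) = I_p(f^\alpha) \cdot \cO_X(pD)\cdot f^{-\alpha}$, this characterizes $I_p(f^\alpha)$ as the ideal of ``leading coefficients'' (in the expansion in $\partial_t^j \delta_\alpha$, where $\delta_\alpha$ is the image of the generator $f^{-\alpha}$) of sections of $F_p \iota_+ \cM(f^{-\alpha})$.

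Next, I would embed $\iota_+ \cM(f^{-\alpha})$ into a larger $\cD_{X \times {\mathbf A}^1}$-module that also contains $B_f$, using Saito's microlocalization in the form developed in \cite{MP2} (which twists $B_f$ by an appropriate power of $\partial_t$ to account for the shift from $f^{-1}$ to $f^{-\alpha}$). Saito's theorem in this setting identifies $F_p \iota_+ \cM(f^{-\alpha})$ with the intersection of a suitable Hodge filtration on the ambient module with $V^\alpha$. Consequently, an element $h \in \cO_X$ is the leading coefficient of a section of $F_p \iota_+ \cM(f^{-\alpha})$ precisely when there exist $h_0, \ldots, h_{p-1} \in \cO_X$ with
$$h\partial_t^p \delta + h_{p-1}\partial_t^{p-1}\delta + \ldots + h_0 \delta \in V^\alpha B_f,$$
which is exactly the condition defining $h \in \widetilde{V}^{p+\alpha}\cO_X$.

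The $+(f)$ appearing on both sides accounts for the fact that the auxiliary coefficients $h_0, \ldots, h_{p-1}$ are determined only modulo $f \cO_X$: since $t\delta = f\delta$ in $B_f$, any multiple of $f$ in a leading coefficient can be absorbed by adjusting lower-order terms, so only the class of $h$ in $\cO_X/(f)$ is a well-defined invariant in either characterization. Reading off the two descriptions of $h$ modulo $f$ yields the stated equality.

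The main obstacle is to set up correctly the ambient module containing both $B_f$ and a natural lift of $\delta_\alpha$, and to transfer the Hodge filtration cleanly through the identification. The integer case $\alpha = 1$ reduces to Saito's comparison between the Hodge filtration on $\cO_X[1/f]$ and the pole-order filtration coming from $V^{>0}B_f$ (see \cite{Saito-MLCT}), while the extension to arbitrary $\alpha \in (0,1]$ in \cite{MP2} requires delicate bookkeeping of the twist by $\partial_t^{-\alpha}$ (or its microlocal substitute) and uses the reducedness of the divisor in an essential way to ensure that the Hodge filtration is generated in the expected range, so that the leading-coefficient description is faithful.
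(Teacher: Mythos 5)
The first thing to note is that the paper does not prove this statement at all: it is imported from the literature, being \cite[Theorem~1]{Saito-MLCT} for integral $\gamma$ and \cite[Theorem~A']{MP2} in general, so there is no internal proof to compare yours against. Your proposal is essentially an outline of the strategy of those references (graph embedding, description of the Hodge filtration on $\iota_+\cM(f^{-\alpha})$, comparison with the $V$-filtration on $B_f$ via Saito's microlocalization), but as written it is not a proof: the entire mathematical content of the theorem is the identification you invoke in your second paragraph --- that $h$ arises (mod $f$) as the leading coefficient of a section of $F_p\iota_+\cM(f^{-\alpha})$ if and only if there are $h_0,\ldots,h_{p-1}$ with $h\partial_t^p\delta+\cdots+h_0\delta\in V^{\alpha}B_f$ --- and you simply assert that ``Saito's theorem in this setting'' supplies it. That is precisely the statement to be proved, so the key step is deferred rather than carried out; making it work requires the genuine Hodge-module input (strictness, the interaction of $F_{\bullet}$ with $V^{\bullet}$ on $B_f$, and the passage to the microlocalization obtained by inverting $\partial_t$), which is where all the work in \cite{Saito-MLCT} and \cite{MP2} lies, and which your sketch acknowledges as ``the main obstacle'' without resolving it.

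Your heuristic for the $+(f)$ is also off: both $I_p(f^{\alpha})$ and $\widetilde{V}^{\gamma}\cO_X$ are honestly defined ideals of $\cO_X$, not objects defined only modulo $(f)$, and the adjustability of the lower-order coefficients $h_0,\ldots,h_{p-1}$ does not make the leading coefficient well defined only in $\cO_X/(f)$. The point of the theorem is that these two ideals agree \emph{only after} adding $(f)$, and in general they differ; indeed, Proposition~\ref{eq_Hodge_microlocal} of the paper shows equality on the nose only in the restricted range $\gamma\leq\widetilde{\alpha}_P(f)+1$, and its proof has to work for that. So the mod-$(f)$ discrepancy is an output of the comparison, not a bookkeeping triviality. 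If you intend to use this result, cite it as the paper does; if you intend to prove it, the microlocal comparison must actually be established.
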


In particular, it follows from the theorem that given $P\in X$ with $f(P)=0$, we have 
$I_p(f^{\alpha})=\cO_X$ around $P$ if and only if $\widetilde{V}^{\gamma}\cO_X=\cO_X$ around $P$. 
It was shown by Saito  in \cite{Saito-MLCT} that if $\widetilde{\alpha}_P(f)$ is the \emph{minimal exponent} of $f$ at $P$, then
\begin{equation}\label{eq1_equality_Hodge_microlocal}
\widetilde{V}^{\gamma}\cO_X=\cO_X\,\,\text{around}\,\,P\quad\text{if and only if}\quad \gamma\leq\widetilde{\alpha}_P(f)
\end{equation} (see also
\cite[Remark~6.13]{MP2}). As a consequence, we get the fact that if $f$ defines a reduced divisor, then
\begin{equation}\label{eq2_equality_Hodge_microlocal}
I_p(f^{\alpha})=\cO_X\,\,\text{around}\,\,P\quad\text{if and only if}\quad \widetilde{\alpha}_P(f)\geq p+\alpha
\end{equation} (see 
\cite[Corollary~6.1]{MP2}). 

\begin{rmk}\label{rmk_def_min_exp}
We note that the definition of the minimal exponent that is used in the above results is in terms of the Bernstein-Sato polynomial of $f$.
We will only need the above characterization and thus do not recall the precise definition. For more details and basic properties of the minimal exponent 
that follow from the above characterization in terms of Hodge ideals, we refer to \cite{MP2}. The fact that for isolated singularities the minimal exponent
coincides with the Arnold exponent follows from \cite{Malgrange2}. We will discuss in more detail the case of isolated singularities in Section 4.
\end{rmk}

For us it will be important that Hodge ideals are equal to microlocal multiplier ideals also in an interval of length 1 starting with the minimal exponent.
More precisely, we have the following result. Recall that the \emph{Jacobian ideal} $J_f$ of $f$ is defined as follows: if $x_1,\ldots,x_n$ are algebraic 
coordinates in an open subset $U$ of $X$, then $J_f\vert_U$ is generated by $\frac{\partial f}{\partial x_1},\ldots,\frac{\partial f}{\partial x_n}$
(this definition is independent of the choice of coordinates and thus by gluing the local definitions we get a coherent ideal sheaf of $\cO_X$).

\begin{prop}\label{eq_Hodge_microlocal}
Let $f$ be a nonzero regular function on the smooth complex algebraic variety $X$, defining a reduced divisor, and $P\in X$ such that $f(P)=0$.
Suppose that $\gamma$ is a positive rational number and we write $\gamma=p+\alpha$, with $p=\lceil\gamma\rceil-1$. If
$\gamma\leq\widetilde{\alpha}_P(f)+1$, then
$$I_p(f^{\alpha})=\widetilde{V}^{\gamma}\cO_X$$
in a neighborhood of $P$; moreover, in such a neighborhood these ideals contain $(f)+J_f$.
\end{prop}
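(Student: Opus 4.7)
The plan is to deduce the equality from Theorem~\ref{Hodge_vs_microlocal}, which already supplies $I_p(f^\alpha)+(f)=\widetilde{V}^\gamma\cO_X+(f)$, by separately verifying the two containments $(f)\subseteq I_p(f^\alpha)$ and $(f)+J_f\subseteq\widetilde{V}^\gamma\cO_X$ in a neighborhood of $P$. Granted these, one immediately gets $I_p(f^\alpha)=I_p(f^\alpha)+(f)=\widetilde{V}^\gamma\cO_X+(f)=\widetilde{V}^\gamma\cO_X$, and the ``moreover'' assertion is automatic. When $\gamma\leq\widetilde{\alpha}_P(f)$, both ideals already equal $\cO_X$ near $P$ by (\ref{eq1_equality_Hodge_microlocal}) and (\ref{eq2_equality_Hodge_microlocal}), so I may assume throughout that $\widetilde{\alpha}_P(f)<\gamma\leq\widetilde{\alpha}_P(f)+1$, and in particular $\gamma-1\leq\widetilde{\alpha}_P(f)$.

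For the microlocal side, this last inequality combined with (\ref{eq1_equality_Hodge_microlocal}) yields $1\in\widetilde{V}^{\gamma-1}\cO_X$ near $P$. When $p\geq 1$, this unpacks to the existence of regular functions $g_0,\ldots,g_{p-2}$ near $P$ with $u:=\partial_t^{p-1}\delta+\sum_{j=0}^{p-2}g_j\partial_t^j\delta\in V^\alpha B_f$. Since $t$ raises and $\partial_t$ lowers the $V$-index by $1$, the operator $\partial_tt$ preserves $V^\alpha B_f$; applying it to $u$ and expanding via the paper's identity $t\partial_t^j\delta=f\partial_t^j\delta-j\partial_t^{j-1}\delta$ produces an element of $V^\alpha B_f$ whose leading $\partial_t^p$-coefficient is $f$, giving $f\in\widetilde{V}^\gamma\cO_X$. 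Analogously, $\partial_{x_i}$ preserves $V^\alpha B_f$ (the latter being a $\cD_X$-submodule of $B_f$), and the graph-embedding identity $\partial_{x_i}\delta=-(\partial f/\partial x_i)\partial_t\delta$ lets one rewrite $\partial_{x_i}u$ as an element of $V^\alpha B_f$ with leading $\partial_t^p$-coefficient $-\partial f/\partial x_i$, yielding $\partial f/\partial x_i\in\widetilde{V}^\gamma\cO_X$. The degenerate case $p=0$ (so $\gamma=\alpha\leq 1$) is direct: $f\delta=t\delta\in V^{>1}B_f\subseteq V^\alpha B_f$ by Remark~\ref{rmk_1_V0}, while $(\partial f/\partial x_i)\delta$ satisfies $\partial_t\bigl((\partial f/\partial x_i)\delta\bigr)=-\partial_{x_i}\delta\in V^{>0}B_f$, so Remark~\ref{rmk0_Vfiltration} places it in $V^{>1}B_f\subseteq V^\alpha B_f$.

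For the Hodge side, the filtration inclusion $F_{p-1}\cM(f^{-\alpha})\subseteq F_p\cM(f^{-\alpha})$ translates, via the description $F_k\cM=I_k(f^\alpha)\cO_X(kD)f^{-\alpha}$ in the reduced case, into $f\cdot I_{p-1}(f^\alpha)\subseteq I_p(f^\alpha)$. For $p\geq 1$, the inequality $(p-1)+\alpha\leq\widetilde{\alpha}_P(f)$ and (\ref{eq2_equality_Hodge_microlocal}) force $I_{p-1}(f^\alpha)=\cO_X$ near $P$, hence $f\in I_p(f^\alpha)$. For $p=0$ (and $\alpha\in(0,1]$), I invoke the identification of $I_0(f^\alpha)$ with a multiplier ideal from \cite{MP1}, together with the elementary observation that $(f)=\cO_X(-D)$ lies in any such multiplier ideal when $D$ is reduced and the coefficient is at most $1$.

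The main subtlety I anticipate is the bookkeeping in the $\partial_tt$ computation: one must keep track of the several lower-order correction terms produced when expanding $t\partial_t^j\delta$, and verify that the final expression really has the shape $f\partial_t^p\delta+\sum_{j<p}h_j\partial_t^j\delta$ with $h_j\in\cO_X$ required by the definition of $\widetilde{V}^\gamma\cO_X$. Once this computation is set up, the rest is a clean assembly of Theorem~\ref{Hodge_vs_microlocal} with the two one-sided containments.
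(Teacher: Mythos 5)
Your proof is correct and, in its core, is the paper's own argument: you reduce the equality, via Theorem~\ref{Hodge_vs_microlocal}, to placing $f$ in both ideals near $P$, and your two key computations (applying $\partial_t t$ to a lift $u=\partial_t^{p-1}\delta+\cdots$ of $1\in\widetilde{V}^{\gamma-1}\cO_X$ to produce $f\partial_t^p\delta+\text{lower order terms}\in V^{\alpha}B_f$, and using $I_{p-1}(f^{\alpha})=\cO_X$ together with $F_{p-1}\cM(f^{-\alpha})\subseteq F_p\cM(f^{-\alpha})$ to get $f\in I_p(f^{\alpha})$) are exactly those in the paper. The one genuine divergence is where the Jacobian ideal is placed: the paper proves $J_f\subseteq I_p(f^{\alpha})$ on the Hodge side, by applying $\partial_{x_i}$ to $\tfrac{1}{f^{p-1}}f^{-\alpha}\in F_{p-1}\cM(f^{-\alpha})$, and for $\gamma\leq 1$ it invokes \cite[Theorem~4.2]{ELSV} to get $J_f\subseteq\cJ(f^{1-\epsilon})$; you instead prove $J_f\subseteq\widetilde{V}^{\gamma}\cO_X$ on the microlocal side, applying $\partial_{x_i}$ to the same lift $u$ via $\partial_{x_i}\delta=-(\partial f/\partial x_i)\partial_t\delta$ (legitimate, since $V^{\alpha}B_f$ is a $\cD_X$-module), and for $p=0$ you argue directly from the $V$-filtration axioms, which has the mild advantage of bypassing the ELSV input; after the equality of the two ideals is established it of course does not matter on which side $J_f$ is verified. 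One small caution in your $p=0$ step: the injectivity of $\partial_t$ on ${\rm Gr}_V^{\beta}$ is only available for $\beta\neq 1$, so the argument you quote yields $(\partial f/\partial x_i)\delta\in V^{1}B_f$ rather than $V^{>1}B_f$ (already for $f=x$ smooth one has $\partial_t\delta\in V^{>0}B_f$ while $\delta\notin V^{>1}B_f$); this is harmless for you, since $V^{1}B_f\subseteq V^{\alpha}B_f$ for $\alpha\leq 1$ is all that your containment requires.
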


\begin{proof}
Suppose first that $\gamma\leq 1$. In this case $p=0$ and both ideals $I_0(f^{\gamma})$ and $\widetilde{V}^{\gamma}\cO_X$ are equal to the multiplier ideal
$\cJ(f^{\gamma-\epsilon})$ for $0<\epsilon\ll 1$ (for the Hodge ideal this follows from \cite[Proposition~9.1]{MP1}, while for the microlocal multiplier
ideal this follows from the result of Budur and Saito \cite{BS} relating multiplier ideals and the $V$-filtration). 
These ideals contain $f$ since 
$(f)=\cJ(f)\subseteq \cJ(f^{\gamma-\epsilon})$ for every $\gamma\leq 1$. 
The fact that $J_f$ is contained in $\cJ(f^{1-\epsilon})$ for $\epsilon>0$ (which, in turn, is contained in $\cJ(f^{\gamma-\epsilon})$) is proved in 
\cite[Theorem~4.2]{ELSV}. 

Suppose now that $\gamma>1$, hence $p\geq 1$, and that $\gamma\leq \widetilde{\alpha}_P(f)+1$. In this case we have
$$I_{p-1}(f^{\alpha})=\cO_X=\widetilde{V}^{\gamma-1}\cO_X$$
in a neighborhood of $P$ by (\ref{eq1_equality_Hodge_microlocal}) and (\ref{eq2_equality_Hodge_microlocal}). 
After possibly replacing $X$ by this neighborhood of $P$, we may and will assume that these equalities hold on $X$.
If we show that $f\in I_p(f^{\alpha})\cap \widetilde{V}^{\gamma}\cO_X$, then 
the equality of the ideals in the proposition follows from Theorem~\ref{Hodge_vs_microlocal}. 
Then the last assertion follows as well if we show that $J_f\subseteq I_p(f^{\alpha})$. 

The fact that $f\in I_p(f^{\alpha})$ has already been noticed in
\cite[Corollary~5.5]{MP2}. The point is that since $I_{p-1}(f^{\alpha})=\cO_X$, we have
$$F_{p-1}\cM(f^{-\alpha})=\left(\cO_X\frac{1}{f^{p-1}}\right)\cdot f^{-\alpha}.$$
The fact that $F_{p-1}\cM(f^{-\alpha})\subseteq F_{p}\cM(f^{-\alpha})$ and the definition of $I_p(f^{\alpha})$ then give
$f\in I_p(f^{\alpha})$. Moreover, since we have $F_1\cD_X\cdot F_{p-1}\cM(f^{-\alpha})\subseteq F_{p}\cM(f^{-\alpha})$,
we see that if $x_1,\ldots,x_n$ are local algebraic coordinates on $X$, then 
$$-(p+\alpha-1)\frac{1}{f^p}\cdot \frac{\partial f}{\partial x_i}f^{-\alpha}=\partial_{x_i}\cdot \left(\frac{1}{f^{p-1}}f^{-\alpha}\right)\in F_p\cM(f^{-\alpha}),$$
hence $J_f\subseteq I_p(f^{\alpha})$.

In order to complete the proof, it is enough to show that $f\in \widetilde{V}^{\gamma}\cO_X$. Since $\widetilde{V}^{\gamma-1}\cO_X=\cO_X$,
it follows that we have in $V^{\alpha}B_f$ an element of the form 
$$u=\partial_t^{p-1}\delta+\text{lower order terms}.$$
In this case we have in $V^{\alpha}B_f$ also the element
$$\partial_ttu=f\partial_t^p\delta+\text{lower order terms},$$
hence $f\in \widetilde{V}^{\gamma}\cO_X$.
\end{proof}

\section{Hodge ideals under finite maps}

In this section we consider the behavior of Hodge ideals under finite surjective morphisms. Let us fix such a morphism
$\pi\colon Y\to X$ between smooth complex $n$-dimensional algebraic varieties. Since we deal with push-forward of $\cD$-modules, in this section
we typically consider right $\cD$-modules. 

Recall that $\pi^*(\cD_X)$ has a canonical structure of $\big(\cD_Y,\pi^{-1}(\cD_X)\big)$-bimodule; as such, it is denoted by $\cD_{Y\to X}$. As for every proper
morphism, we have an induced push-forward morphism between the corresponding derived categories of (quasi-coherent) right $\cD$-modules given by
$R\pi_*(-\otimes^L_{\cD_Y}\cD_{Y\to X})$, see \cite[Chapters~1.3 and 2.5]{HTT}. The case of finite maps is easier: first, $\pi_*$ is exact on quasi-coherent
$\cO_Y$-modules. Second, $\cD_{Y\to X}$ is a flat left $\cD_Y$-module (see \cite[Theorem~2.11.10]{Bjork} or \cite[Proposition~2.10]{FiniteMaps}).
This implies that the functor between derived categories is induced by the exact functor
$\pi_+=\pi_*(-\otimes_{\cD_Y}\cD_{Y\to X})$ between the corresponding Abelian categories of right $\cD$-modules. 
Note that if $\pi$ is \'{e}tale, then $\cD_Y=\pi^*(\cD_X)$ and the functor $\pi_+$ is equal to $\pi_*$ on right $\cD_Y$-modules.

\begin{rmk}\label{rmk_can_morphism_between_pushforwards}
We have a morphism of left $\cD_Y$-modules $\cD_Y\to\cD_{Y\to X}$ that maps $1$ to $\pi^*(1)$. This induces a canonical morphism
of $\cO_X$-modules $\pi_*(\cM)\to\pi_+(\cM)$, which is an isomorphism if $\pi$ is \'{e}tale. 
\end{rmk}

We also write $i_+$ for the push-forward functor between the Abelian categories of $\cD$-modules when $i\colon U\hookrightarrow X$
is the open immersion corresponding to the complement of an effective divisor in $X$. We thus have $i_+\cM=i_*\cM$ for every
right $\cD_U$-module $\cM$. It is straightforward to see (and well-known) that if $j\colon V\hookrightarrow Y$ is the open immersion
with $V=\pi^{-1}(U)$ and $\varphi\colon V\to U$ is the induced morphism,
 then we have a canonical isomorphism of functors $\pi_+\circ j_+\simeq i_+\circ \varphi_+$.

\begin{lem}\label{lem1_finite_maps}
If $\pi\colon Y\to X$ is a finite surjective morphism between smooth complex algebraic varieties and if $\cM$ is a right $\cD_Y$-module having
no torsion as an $\cO_Y$-module, then $\pi_+(\cM)$ has no torsion as an $\cO_X$-module.
\end{lem}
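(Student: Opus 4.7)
The plan is to embed $\pi_+\cM$ into a sheaf that is manifestly $\cO_X$-torsion-free, by reducing to the open subset where $\pi$ is \'{e}tale. Since torsion-freeness is local on $X$, we may work affine-locally on $X$ and assume $X$ (hence $Y$) irreducible. Because $\pi$ is finite and surjective between smooth varieties of the same dimension, it is flat and generically \'{e}tale, so (locally on $X$) we may choose an effective divisor $D$ on $X$ such that $\pi$ is \'{e}tale over $U := X \setminus D$. Set $V := \pi^{-1}(U)$ and $E := \pi^{-1}(D) = Y \setminus V$; by finite flatness, $E$ is an effective Cartier divisor on $Y$. Let $j \colon V \hookrightarrow Y$ and $i \colon U \hookrightarrow X$ be the open immersions and write $\varphi := \pi|_V \colon V \to U$.

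The argument then proceeds in three steps. First, the canonical map $\cM \to j_+(\cM|_V) = j_*(\cM|_V)$ is injective: on an affine open $W \subset Y$ with local equation $g \in \cO_Y(W)$ for $E$, the induced map $\cM(W) \to \cM(W)[g^{-1}]$ has kernel equal to the $g$-power-torsion submodule, and since $g$ is a non-zerodivisor in $\cO_Y(W)$ while $\cM$ has no $\cO_Y$-torsion, this kernel vanishes. Second, applying the exact functor $\pi_+$ (exact by flatness of $\cD_{Y \to X}$ over $\cD_Y$, as recalled above) and using the canonical isomorphism $\pi_+ \circ j_+ \simeq i_+ \circ \varphi_+$, we obtain an injection
$$\pi_+\cM \hookrightarrow i_+\bigl(\varphi_+(\cM|_V)\bigr) = i_*\bigl(\varphi_+(\cM|_V)\bigr).$$
Third, since $\varphi$ is \'{e}tale, $\varphi_+(\cM|_V)$ coincides with $\varphi_*(\cM|_V)$ as an $\cO_U$-module by Remark~\ref{rmk_can_morphism_between_pushforwards}; flatness of $\varphi$ makes the pullback of any nonzero section of $\cO_U$ a non-zerodivisor on $\cM|_V$, so $\varphi_*(\cM|_V)$ is $\cO_U$-torsion-free. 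Since $U$ is dense in the irreducible $X$, any nonzero section of $\cO_X$ over an open $W$ restricts to a nonzero element of $\cO_X(W \cap U)$, and such an element acts injectively on $\varphi_*(\cM|_V)(W\cap U)$; hence $i_*(\varphi_+(\cM|_V))$ is $\cO_X$-torsion-free, and so is its subsheaf $\pi_+\cM$.

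The main obstacle is step one: verifying that the adjunction map $\cM \to j_*(\cM|_V)$ is injective. This hinges on $E = \pi^{-1}(D)$ being a Cartier divisor on $Y$ so that its local equations are non-zerodivisors, on which the hypothesis of $\cO_Y$-torsion-freeness of $\cM$ can bite; this uses both the smoothness of $Y$ and the finite flatness of $\pi$. Once this is in hand, the remainder is routine bookkeeping combining the exactness of $\pi_+$, the compatibility $\pi_+ \circ j_+ \simeq i_+ \circ \varphi_+$ recalled in the text, and the standard observation that \'{e}tale (in fact, flat) finite pushforward preserves $\cO$-torsion-freeness.
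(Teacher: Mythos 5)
Your proof is correct and takes essentially the same route as the paper's: restrict to the locus $U$ over which $\pi$ is \'{e}tale, embed $\cM\hookrightarrow j_+(\cM\vert_V)$ using the $\cO_Y$-torsion-freeness, apply the exact functor $\pi_+$ together with the isomorphism $\pi_+\circ j_+\simeq i_+\circ\varphi_+$, and conclude because $i_*\varphi_*(\cM\vert_V)$ is $\cO_X$-torsion-free. The only (harmless) slip is the parenthetical claim that $Y$ may be assumed irreducible---it need not be when $X$ is---but nothing in your argument actually uses this, since all that is needed is that a local equation of $Y\smallsetminus V$ is a non-zerodivisor on $\cO_Y$, which holds because every component of $Y$ dominates $X$.
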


\begin{proof}
The assertion is local on $X$, hence we may assume that $X$ (and thus also $Y$) is affine. By generic smoothness, we can find a nonzero
$h\in\cO_X(X)$ such that $\pi$ is \'{e}tale over the open subset $U=(h\neq 0)$. Let $V=\pi^{-1}(U)$ and $j\colon V\hookrightarrow Y$ and
$i\colon U\hookrightarrow X$ be the corresponding open immersions. Note that we have a canonical morphism 
$\cM\to j_+(\cM\vert_V)$, which is injective since $\cM$ has no torsion as an $\cO_Y$-module. By taking the direct image, we get
an injective morphism
$$\pi_+\cM\to \pi_+j_+(\cM\vert_V)\simeq i_+\varphi_+(\cM\vert_V),$$
where $\varphi\colon V\to U$ is the induced morphism; therefore it is enough to show that the right-hand side has no torsion.
Note that as an $\cO_X$-module, $i_+\varphi_+(\cM\vert_V)$ is simply $i_*\varphi_*(\cM\vert_V)$, since
$\varphi$ is \'{e}tale. 
Since $\cM\vert_V$ is an $\cO_V$-module without torsion, it follows
that $i_*\varphi_*(\cM\vert_V)$ is an $\cO_X$-module without torsion. This completes the proof. 
\end{proof}

In what follows, we will make use of Saito's theory of pure and mixed Hodge modules, for which we refer to \cite{Saito-MHP} and
\cite{Saito-MHM}. Recall that a mixed Hodge module has an underlying filtered (right) $\cD$-module. For example,
on a smooth $n$-dimensional variety $X$, we have the pure Hodge module $\Q_X^H[n]$, whose underlying $\cD_X$-module
is $\omega_X$ (the right $\cD_X$-module corresponding to $\cO_X$), with the filtration given by $F_{p-n}\omega_X=\omega_X$ for
$p\geq 0$ and $F_{p-n}\omega_X=0$, otherwise. 

Suppose now that $\varphi\colon W\to X$ is a morphism between smooth varieties which is either finite and surjective or
an open immersion, given by the complement of an effective divisor. If 
$(\cM,F_{\bullet})$ is a filtered $\cD_W$-module on $W$ that underlies a 
mixed Hodge module $M$, then we have by \cite{Saito-MHM} a mixed Hodge module that we will denote by $\varphi_+M$
and whose underlying filtered $\cD_X$-module we will denote by $\varphi_+(\cM,F_{\bullet})$. The corresponding $\cD_X$-module
is just $\varphi_+(\cM)$, but the description of the filtration is rather subtle. 
When $W$ is the complement of the hypersurface defined by $f\in\cO_X(X)$ and $M={\mathbf Q}_W^H[n]$, we get the filtration on $\cO_X[1/f]$ described by the Hodge ideals
of $f$. 
One easy case is that when $\varphi$ is finite and \'{e}tale, in which case
$F_p(\varphi_+\cM)=\varphi_*(F_p\cM)$ for all $p$. 

Given a finite surjective morphism $\pi\colon Y\to X$ of smooth $n$-dimensional algebraic varieties, we have a canonical
morphism of mixed Hodge modules
\begin{equation}\label{eq_can_map_Q}
\Q_X^H[n]\to\pi_+\Q_Y[n]
\end{equation}
that commutes with restriction to open subsets of $X$. 
At the level of $\cD_X$-modules, this is given by the composition
$$\omega_X\hookrightarrow \pi_*\omega_Y\to\pi_+\omega_Y,$$
where the first morphism maps a form $\eta\in\omega_X$ to its pull-back $\pi^*(\eta)\in\omega_Y$
and the second morphism is the one in Remark~\ref{rmk_can_morphism_between_pushforwards}.


The next lemma provides the ingredient to relate Hodge ideals under finite maps. Suppose that $\pi\colon Y\to X$
is a finite surjective morphism between $n$-dimensional smooth varieties, $f\in\cO_X(X)$ is nonzero, and $g=f\circ\pi\in\cO_Y(Y)$.
Recall that associated to $f$ and $g$ we have the filtered right $\cD$-modules $\cM_r(f^{-\alpha})$ and $\cM_r(g^{-\alpha})$ 
on $X$ and $Y$, respectively.
Note that for every $\alpha\in\Q_{>0}$, we have a canonical morphism of $\cO_X$-modules
\begin{equation}\label{eq0_morphism}
\cM_r(f^{-\alpha})\to \pi_*\cM_r(g^{-\alpha})
\end{equation}
that maps $u=f^{-\alpha}\frac{\eta}{f^m}$, with $\eta\in\omega_Y$, to $\pi^*(u):=g^{-\alpha}\frac{\pi^*(\eta)}{g^m}$.

\begin{lem}\label{lem2_finite_maps}
With the above notation, for every $\alpha\in\Q_{>0}$, the map $\tau$ given by the composition
$$\cM_r(f^{-\alpha})\to \pi_*\cM_r(g^{-\alpha})\to\pi_+\cM_r(g^{-\alpha}),$$
where the first map is the one in (\ref{eq0_morphism}) and the second map is the one in Remark~\ref{rmk_can_morphism_between_pushforwards},
is an injective strict morphism of filtered $\cD_X$-modules. 
\end{lem}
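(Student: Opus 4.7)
The plan is to verify the three claims in turn: $\cD_X$-linearity, injectivity, and strictness with respect to the Hodge filtrations, the last being the main technical hurdle.

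For $\cD_X$-linearity, I would identify $\tau$ as the unit of a standard adjunction, arising from a canonical isomorphism of left $\cD_Y$-modules
$$\pi^*\cM(f^{-\alpha})\cong \cM(g^{-\alpha}),$$
which sends $1\otimes f^{-\alpha}$ to $g^{-\alpha}$ and is $\cD_Y$-linear by the chain rule (using $\pi^*(f)=g$). For a finite morphism of equidimensional smooth varieties, $\pi^+$ agrees with $\pi^*$ on left $\cD$-modules and is left adjoint to $\pi_+$; the unit of this adjunction gives a $\cD_X$-linear morphism $\cM(f^{-\alpha})\to \pi_+\cM(g^{-\alpha})$, and unwinding the definitions shows it matches the composition in the statement after applying the side-changing functor $\omega_X\otimes(-)$.

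For injectivity, I would invoke Lemma~\ref{lem1_finite_maps}. Since $\cM_r(g^{-\alpha})$ is locally free over $\cO_Y[1/g]$ and hence $\cO_Y$-torsion-free, $\pi_+\cM_r(g^{-\alpha})$ is $\cO_X$-torsion-free, so $\ker(\tau)$ is $\cO_X$-torsion-free as well. Injectivity can therefore be checked on any dense open $U\subseteq X$. Choosing $U$ inside the intersection of the étale locus of $\pi$ and the complement of $\{f=0\}$, and letting $V=\pi^{-1}(U)$, the restriction $\tau|_U$ reduces to the adjunction unit $\omega_U\to \pi_*\omega_V$ for the finite étale surjection $V\to U$, which is injective (indeed split by the normalized trace).

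For strictness, which is the main obstacle, I would identify $\tau$ as the $\cD$-module-level morphism underlying a morphism in Saito's category $\mathrm{MHM}(X)$ of mixed Hodge modules. Both $\cM_r(f^{-\alpha})$ (as recalled in Section~2, following \cite{MP1}) and $\pi_+\cM_r(g^{-\alpha})$ (obtained by applying the Hodge-theoretic pushforward $\pi_+$ to the mixed Hodge module on $Y$ whose underlying $\cD_Y$-module is $\cM_r(g^{-\alpha})$) carry canonical MHM structures, and for a finite morphism the Hodge filtration of the MHM pushforward coincides with the filtration on the naive filtered-$\cD$-module pushforward used in the statement. To produce the MHM lift of $\tau$, I would first exhibit it over the open set $U$ chosen above, where both MHMs restrict to admissible twists of constant variations of mixed Hodge structure and $\tau|_U$ is manifestly a morphism of such variations; the extension to all of $X$ would then follow from the minimal/intermediate-extension formalism built into the definition of MHM. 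Once $\tau$ is a morphism in $\mathrm{MHM}(X)$, its strictness with respect to the Hodge filtration is automatic from Saito's theorem that morphisms of mixed Hodge modules are strictly filtered. The real delicacy lies in this MHM upgrade: confirming that the naturally-defined $\cD$-linear $\tau$ really corresponds to a morphism in $\mathrm{MHM}(X)$, and not merely a $\cD$-linear map, requires a careful comparison of the Hodge-theoretic data (monodromy, weights, polarizations) along the ramification locus of $\pi$ and along $\{f=0\}$.
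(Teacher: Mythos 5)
Your reduction of strictness to exhibiting $\tau$ as the underlying map of a morphism in ${\rm MHM}(X)$ is exactly the right idea, and your injectivity reduction to a dense open is essentially the paper's (though the torsion-freeness you need is that of the \emph{source} $\cM_r(f^{-\alpha})$, so that a kernel supported on a proper closed subset must vanish; torsion-freeness of the target, via Lemma~\ref{lem1_finite_maps}, does not by itself control the kernel). The genuine gap is in the step you yourself flag as the ``real delicacy'': you never actually produce the MHM lift of $\tau$. Your proposed route --- define the morphism over the open set $U$ (\'{e}tale locus of $\pi$ minus $\{f=0\}$), where everything is a twisted variation of Hodge structure, and then ``extend by the minimal/intermediate-extension formalism'' --- does not work as stated. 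Neither side is an intermediate extension from $U$, and the target $\pi_+\cM_r(g^{-\alpha})$ is not the $*$-pushforward of its restriction to $U$: already for $\varphi\colon\A^1\to\A^1$, $z\mapsto z^2$, the module $\varphi_+\cO_{\A^1}$ differs from the pushforward of its restriction to $\A^1\smallsetminus\{0\}$. So a morphism of mixed Hodge modules defined over $U$ has no automatic extension across the branch locus, and the key assertion that the $\cD$-linear $\tau$ underlies a morphism in ${\rm MHM}(X)$ is left unproved.

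The paper closes precisely this hole by constructing the morphism globally rather than extending it from an open set: the Hodge filtration on $\cM_r(f^{-\alpha})$ is \emph{defined} in \cite{MP1} via the $m$-fold cyclic covers $X''\to X'\subseteq X$ and $Y''\to Y'\subseteq Y$ (with $m\alpha\in\Z$), and one applies the canonical MHM morphism $\Q_{X''}^H[n]\to\psi_+\Q_{Y''}^H[n]$ attached to the finite surjection $\psi\colon Y''\to X''$, pushes forward by $i_+p_+$, and extracts the appropriate $\Z/m\Z$-eigenspace; this yields a filtered morphism $\cM_r(f^{-\alpha})\to\pi_+\cM_r(g^{-\alpha})$ which one then checks (a direct computation) agrees with the stated composition, after which strictness is indeed automatic from Saito's theorem, as you say. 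If you want to salvage your approach in the spirit of your adjunction remark, the workable version is to produce the MHM morphism over all of $X'=X\smallsetminus V(f)$ (e.g.\ via the cyclic-cover description or an adjunction for the finite map $Y'\to X'$) and only then extend to $X$, using that both modules \emph{are} $i_*$ of their restrictions to $X'$; extension from the smaller set $U$ is the step that fails.
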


\begin{proof}
For the proof, we will need to make use of the definition of the filtrations on $\cM_r(f^{-\alpha})$ and $\cM_r(g^{-\alpha})$
in \cite[Section~2]{MP1}. Let $X'=X\smallsetminus V(f)$ and $Y'=Y\smallsetminus V(g)$. Choose an integer $m\geq 2$ such that $m\alpha \in \Z$
and let 
$$Y''={\mathcal Spec}\big(\cO_{Y'}[y]/(y^m-g^{-m\alpha})\big)\quad\text{and}\quad X''={\mathcal Spec}\big(\cO_{X'}[y]/(y^m-f^{-m\alpha})\big),$$
so that we have a diagram with Cartesian squares
$$
\xymatrix{
Y''\ar[r]^q \ar[d]_{\psi} & Y'\ar[d]_{\varphi}\ar[r]^{j} & Y\ar[d]_{\pi} \\
X''\ar[r]^p & X'\ar[r]^i & X,
}
$$
with $i$ and $j$ open immersions and $p$ and $q$ finite \'{e}tale morphisms.
We have by (\ref{eq_can_map_Q}) a canonical morphism of mixed Hodge modules
$$\Q_{X''}^H[n]\to\psi_+\Q_{Y''}^H[n],$$
that induces after applying $i_+p_+$ and taking the underlying filtered $\cD_X$-modules,
a morphism of filtered $\cD_X$-modules
$$\bigoplus_{k=0}^{m-1}\cM_r(f^{-k\alpha})\to \pi_+\bigoplus_{k=0}^{m-1}\cM_r(g^{-k\alpha}).$$
By taking the suitable eigenspace with respect to the $\Z/m\Z$ action on both sides, we obtain, for $k=1$,
a morphism of filtered $\cD_X$-modules 
$$\cM_r(f^{-\alpha})\to\pi_+\cM_r(g^{-\alpha}).$$
Checking that it is given by the composition in the statement is an easy exercise. Strictness follows from the fact that for every morphism
of mixed Hodge modules, the underlying morphism of filtered $\cD$-modules is strict. 
The fact that $\tau$ is injective is clear if $\pi$ is \'{e}tale; the general case follows by restricting to an open subset
$U$ of $X$ such that $\pi$ is \'{e}tale over $U$ and using the fact that as an $\cO_X$-module, $\cM_r(f^{-\alpha})$ has no torsion.
\end{proof}

In order to describe the filtration on $\pi_+\cM_r(g^{-\alpha})$, we take the usual approach, by factoring $\pi$ as
$q\circ \rho$, where $\rho\colon Y\to Y\times X$ is the graph embedding given by $\rho(y)=\big(y,\pi(y)\big)$ and
$q\colon Y\times X\to X$ is the projection onto the second component. Let us assume that we have algebraic coordinates 
$x_1,\ldots,x_n$ defined on $X$ (we can always reduce to this case by taking a suitable cover of $X$). Let $\pi_i=\pi^*(x_i)\in\cO_Y(Y)$.
If $\cM$ is a right $\cD_Y$-module, then the $\cD$-module push-forward via $\rho$ is easy to compute: we have an isomorphism
\begin{equation}\label{eq_isom_immersion}
\rho_+\cM\simeq \cM\otimes_{\C}\C[\partial_{x_1},\ldots,\partial_{x_n}],
\end{equation}
where on the right-hand side a function $h\in\cO_X$ acts via
$$(u\otimes 1)h=u\pi^*(h)\otimes 1\quad\text{for all}\quad u\in\cM$$
and a derivation $D\in {\rm Der}_{\C}(\cO_Y)$ acts by
\begin{equation}\label{eq_action_od_derivations}
(u\otimes \partial_x^{\beta})D=uD\otimes\partial_x^{\beta}-\sum_{i=1}^nu D(\pi_i)\otimes\partial_x^{\beta+e_i}\quad\text{for}\quad u\in\cM, 
\beta\in\Z_{\geq 0}^n,
\end{equation}
where we use the multi-index notation and $e_1,\ldots,e_n$ is the standard basis of $\Z^n$. 
Moreover, if $(\cM,F_{\bullet})$ is a filtered $\cD_Y$-module, then via the isomorphism (\ref{eq_isom_immersion})
we have
\begin{equation}\label{filtration_rho}
F_k\rho_+\cM=\sum_{\beta\in\Z_{\geq 0}^n}F_{k-|\beta|}\cM\otimes \partial_x^{\beta}\quad\text{for all}\quad k\in\Z,
\end{equation}
where for $\beta=(\beta_1,\ldots,\beta_n)$, we put $|\beta|=\sum_i\beta_i$. 

On the other hand, the $\cD$-module push-forward via $q$ is computed by the relative Spencer complex. Given a right $\cD_{Y\times X}$-module
$\cN$, the relative Spencer complex of $\cN$ is the complex
$$C^{\bullet}(\cN):\,\,0\longrightarrow\cN\otimes_{\cO_{Y\times X}}\wedge^np^*(\cT_Y)\longrightarrow \ldots\longrightarrow \cN\otimes_{\cO_{Y\times X}}p^*(\cT_Y)
\overset{d_1}\longrightarrow\cN\to 0,$$
where $\cT_Y$ is the tangent sheaf of $Y$ and $p\colon Y\times X\to Y$ is the projection onto the first component. 
The map $d_1$, which is the only one we will need, is given by right multiplication. 

If $\cM$ is a $\cD_Y$-module, then we have a canonical isomorphism
$$\pi_+\cM\simeq \cH^0\big(C^{\bullet}(\rho_+\cM)\big)={\rm coker}\big(\rho_+\cM\otimes p^*(\cT_Y)\to \rho_+\cM\big).$$
Moreover, if $(\cM,F_{\bullet})$ is the filtered $\cD_Y$-module underlying a mixed Hodge module, then via this isomorphism,
the filtration on
$\pi_+\cM$ is the quotient filtration induced by the filtration on $\rho_+\cM$ described in (\ref{filtration_rho}).

\begin{rmk}\label{rmk_description_tau}
If we are in the setting of Lemma~\ref{lem2_finite_maps}, then via the 
isomorphism 
\begin{equation}\label{eq_rmk_description_tau}
\pi_+\cM_r(g^{-\alpha})\simeq {\rm coker}\big(\rho_+\cM_r(g^{-\alpha}) \otimes p^*(\cT_Y)\to \rho_+\cM_r(g^{-\alpha})\big),
\end{equation}
the morphism $\tau$ maps $u\in \cM_r(f^{-\alpha})$ to the class of $\pi^*(u)\otimes 1\in \rho_+\cM_r(g^{-\alpha})$. Indeed, since 
as an $\cO_X$-module $\pi_+\cM_r(g^{-\alpha})$ has no torsion by Lemma~\ref{lem1_finite_maps}, in order to check the assertion
we may restrict to on open subset $U$ of $X$ such that $\pi$ is \'{e}tale over $U$; in this case the assertion follows via an easy computation.
\end{rmk}

\begin{thm}\label{thm2_v2}
Let $\pi\colon Y\to X$ be a finite surjective morphism between smooth $n$-dimensional complex algebraic varieties. If $f\in\cO_X(X)$ is nonzero
and $g=\pi^*(f)\in\cO_Y(Y)$, then for every $\alpha\in\Q_{>0}$ and $k\in\Z$, we have the following inclusion
$$\{u\in \cM_r(f^{-\alpha})\mid \pi^*(u)\in F_k\cM_r(g^{-\alpha})\}\subseteq F_k\cM_r(f^{-\alpha}).$$
\end{thm}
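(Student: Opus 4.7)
The plan is to reduce the statement to a formal consequence of Lemma~\ref{lem2_finite_maps} by tracking the image of $u$ under the comparison morphism $\tau\colon \cM_r(f^{-\alpha})\to \pi_+\cM_r(g^{-\alpha})$ provided there. All substantive Hodge-theoretic work has been absorbed into that lemma; what remains is strict filtration bookkeeping on the factorization $\pi=q\circ\rho$ through the graph embedding.

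First, I would use Remark~\ref{rmk_description_tau} to identify $\tau(u)$, under the isomorphism \eqref{eq_rmk_description_tau}, with the class of $\pi^*(u)\otimes 1\in\rho_+\cM_r(g^{-\alpha})$. The description \eqref{filtration_rho} of the filtration on the graph embedding, applied to the term $\beta=0$, gives the inclusion
$$F_k\cM_r(g^{-\alpha})\otimes 1 \,\subseteq\, F_k\rho_+\cM_r(g^{-\alpha}).$$
Since the filtration on $\pi_+\cM_r(g^{-\alpha})$ is the quotient filtration induced from $\rho_+\cM_r(g^{-\alpha})$ via the relative Spencer complex (as recalled just before Remark~\ref{rmk_description_tau}), the hypothesis $\pi^*(u)\in F_k\cM_r(g^{-\alpha})$ immediately yields $\tau(u)\in F_k\pi_+\cM_r(g^{-\alpha})$.

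Second, I would invoke both the injectivity and the strictness of $\tau$ established in Lemma~\ref{lem2_finite_maps}. Strictness yields the identity
$$\tau\bigl(F_k\cM_r(f^{-\alpha})\bigr)\,=\,\tau\bigl(\cM_r(f^{-\alpha})\bigr)\cap F_k\pi_+\cM_r(g^{-\alpha}),$$
so there is some $v\in F_k\cM_r(f^{-\alpha})$ with $\tau(v)=\tau(u)$; injectivity then forces $u=v$, and hence $u\in F_k\cM_r(f^{-\alpha})$ as required.

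I do not expect any genuine obstacle in this step itself: the constructive content (obtaining $\tau$ as a morphism of filtered $\cD_X$-modules arising from a morphism of mixed Hodge modules on the cyclic covers $X''$, $Y''$, together with its injectivity and strictness) is already packaged inside Lemma~\ref{lem2_finite_maps}. The theorem is then a clean bookkeeping consequence of the quotient-filtration description of $\pi_+$ through the graph factorization $\pi=q\circ\rho$.
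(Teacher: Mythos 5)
Your proposal is correct and follows essentially the same route as the paper: reduce to coordinates (implicitly, via the graph factorization), note that the hypothesis together with the quotient-filtration description of $\pi_+$ and Remark~\ref{rmk_description_tau} puts $\tau(u)$ in $F_k\big(\pi_+\cM_r(g^{-\alpha})\big)$, and then conclude by the injectivity and strictness of $\tau$ from Lemma~\ref{lem2_finite_maps}. The only cosmetic difference is that you spell out the strictness/injectivity bookkeeping that the paper leaves implicit.
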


Note that if we assume that $f$ and $g$ define reduced divisors, then by passing from right to left $\cD$-modules and using the definition
of Hodge ideals, we obtain the assertion in Theorem~\ref{thm2}.

\begin{proof}[Proof of Theorem~\ref{thm2_v2}]
After taking a suitable open cover of $X$, we may assume that we have algebraic coordinates defined on $X$. 
If $u\in \cM_r(f^{-\alpha})$ is such that $\pi^*(u)\in F_k\cM_r(g^{-\alpha})$, then it follows from the above description of the filtration 
on $\pi_+\cM_r(g^{-\alpha})$ that the class of $\pi^*(u)\otimes 1$ in ${\rm coker}\big(\rho_+\cM_r(g^{-\alpha}) \otimes p^*(\cT_Y)\to \rho_+\cM_r(g^{-\alpha})\big)$
lies in $F_k\big(\pi_+\cM_r(g^{-\alpha})\big)$. Since this is equal to $\tau(u)$ and $\tau$ is strict by Lemma~\ref{lem2_finite_maps}, we conclude that
$u\in F_k\cM_r(f^{-\alpha})$. 
\end{proof}

Our next goal is to understand how far the inclusion in Theorem~\ref{thm2_v2} is from being an equality. 
We will do this under the extra assumption that $\pi$ is Galois. 
This result will not be used in the following sections.

Let $\pi\colon Y\to X$ be a finite surjective morphism between smooth complex algebraic varieties. Recall that $\pi$ is Galois if the corresponding
field extension $\C(X)\hookrightarrow\C(Y)$ is normal (hence Galois, since we are in characteristic $0$). Suppose that this is the case and let 
$G$ be the Galois group of this field extension. Since $Y$ is the integral closure of $X$ in $\C(Y)$, we have an induced action of $G$ on $Y$ and 
$\pi$ is the quotient morphism with respect to this action. 

\begin{thm}\label{thm3}
Let $\pi\colon Y\to X$ be a Galois finite surjective morphism between smooth $n$-dimensional complex algebraic varieties. 
Suppose that $f\in\cO_X(X)$ is nonzero
and let $g=\pi^*(f)\in\cO_Y(Y)$. For every $\alpha\in\Q_{>0}$ and $k\in\Z$, we put
$$F'_k\cM_r(f^{-\alpha}):=\{u\in \cM_r(f^{-\alpha})\mid \pi^*(u)\in F_{k}\cM_r(g^{-\alpha})\}.$$
We then have for every $k\in\Z$
\begin{equation}\label{eq_thm3}
F_k\cM_r(f^{-\alpha})=\sum_{i\geq 0}F'_{k-i}\cM_r(f^{-\alpha})\cdot F_i\cD_X.
\end{equation}
In particular, we have 
$$F_k\cM_r(f^{-\alpha})=F_{k-1}\cM_r(f^{-\alpha})\cdot F_1\cD_X+F'_k\cM_r(f^{-\alpha}).$$
\end{thm}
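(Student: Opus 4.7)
The plan is to combine the strictness of $\tau$ established in Lemma~\ref{lem2_finite_maps} with a Galois averaging argument. The inclusion $\supseteq$ in (\ref{eq_thm3}) is immediate from Theorem~\ref{thm2_v2} (since $F'_j\subseteq F_j$) and the compatibility of $F_\bullet\cM_r(f^{-\alpha})$ with $F_\bullet\cD_X$, so the substance lies in the reverse inclusion.

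First I would take $u\in F_k\cM_r(f^{-\alpha})$. Strictness of $\tau$ forces $\tau(u)\in F_k\pi_+\cM_r(g^{-\alpha})$, and by Remark~\ref{rmk_description_tau} together with~(\ref{filtration_rho}), I can find finitely many $v_\beta\in F_{k-|\beta|}\cM_r(g^{-\alpha})$ such that $\pi^*(u)\otimes 1-\sum_\beta v_\beta\otimes\partial_x^\beta$ lies in the image of the Spencer differential $\rho_+\cM_r(g^{-\alpha})\otimes p^*(\cT_Y)\to\rho_+\cM_r(g^{-\alpha})$. Now I would exploit the Galois structure: since $g=\pi^*(f)$ is $G$-invariant, $G$ acts on $\cM_r(g^{-\alpha})$ by automorphisms of the underlying mixed Hodge module, and hence on $\rho_+\cM_r(g^{-\alpha})$ through the first tensor factor, preserving both the Hodge filtration and the (manifestly $G$-equivariant) image of the Spencer differential. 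Since $\pi^*(u)\otimes 1$ is $G$-invariant, averaging with the projector $P=\frac{1}{|G|}\sum_{\sigma\in G}\sigma$ yields the same congruence with each $v_\beta$ replaced by $P(v_\beta)$, which is $G$-invariant and still lies in $F_{k-|\beta|}\cM_r(g^{-\alpha})$.

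The key step, and the main obstacle, is then to write each $P(v_\beta)$ as $\pi^*(w_\beta)$ for some $w_\beta\in\cM_r(f^{-\alpha})$; the filtration level will then automatically force $w_\beta\in F'_{k-|\beta|}\cM_r(f^{-\alpha})$ directly from the definition of $F'$. What I really need is the identification $\cM_r(g^{-\alpha})^G=\pi^*\cM_r(f^{-\alpha})$ of $\cO_X$-submodules of $\pi_*\cM_r(g^{-\alpha})$. Since $g^{-\alpha}$ is itself $G$-invariant and $\cO_X[1/f]$ acts $G$-equivariantly, this reduces to the standard fact $(\pi_*\omega_Y)^G=\pi^*(\omega_X)$ for a finite Galois cover between smooth varieties; one verifies this locally by completing at a point of $X$, using transitivity of $G$ on the fiber to reduce to the totally ramified case, and then checking it by direct computation (for instance, for $\pi\colon y\mapsto y^m$ with cyclic Galois group, the invariant one-forms on the target are exactly $\cO_X\cdot y^{m-1}dy=\cO_X\cdot\pi^*(dx)/m$).

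With $P(v_\beta)=\pi^*(w_\beta)$ in hand, I would pass to classes in $\pi_+\cM_r(g^{-\alpha})$: using $\cD_X$-linearity of $\tau$, the congruence becomes $\tau(u)=\sum_\beta\tau(w_\beta)\cdot\partial_x^\beta=\tau\bigl(\sum_\beta w_\beta\partial_x^\beta\bigr)$, and injectivity of $\tau$ (Lemma~\ref{lem2_finite_maps}) forces $u=\sum_\beta w_\beta\partial_x^\beta\in\sum_{i\geq 0}F'_{k-i}\cM_r(f^{-\alpha})\cdot F_i\cD_X$, establishing (\ref{eq_thm3}). The ``in particular'' statement is then formal: in the sum, the $i=0$ term contributes $F'_k\cM_r(f^{-\alpha})$, and for $i\geq 1$ one writes $F_i\cD_X=F_{i-1}\cD_X\cdot F_1\cD_X$ and invokes Theorem~\ref{thm2_v2} to absorb $F'_{k-i}\cM_r(f^{-\alpha})\cdot F_{i-1}\cD_X$ into $F_{k-1}\cM_r(f^{-\alpha})$, collapsing all $i\geq 1$ terms into $F_{k-1}\cM_r(f^{-\alpha})\cdot F_1\cD_X$.
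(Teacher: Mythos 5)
Your proposal is correct and follows essentially the same route as the paper's proof: strictness and injectivity of $\tau$ from Lemma~\ref{lem2_finite_maps}, Galois averaging on $\rho_+\cM_r(g^{-\alpha})$ (legitimate because $g$ is $G$-invariant, so the filtration and the image of the Spencer differential are $G$-stable), and the identification of $G$-invariant sections of $\cM_r(g^{-\alpha})$ with pullbacks from $X$, followed by injectivity of $\tau$. The only cosmetic differences are that the paper invokes Brion's theorem for $(\pi_*\omega_Y)^G=\pi^*\omega_X$ where you sketch a local computation, and that it verifies that $\pi^*(w_\beta)\otimes\partial_x^\beta$ and $\pi^*(w_\beta\partial_x^\beta)\otimes 1$ agree in $\pi_+\cM_r(g^{-\alpha})$ by restricting to the \'{e}tale locus (using the torsion-freeness of Lemma~\ref{lem1_finite_maps}) rather than by your appeal to the right $\cD_X$-module structure on $\rho_+\cM_r(g^{-\alpha})$ and the $\cD_X$-linearity of $\tau$, which is an equally valid shortcut.
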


\begin{proof}
After taking a suitable open cover of $X$, we may and will assume that $X$ (hence also $Y$) is affine and we have algebraic coordinates 
$x_1,\ldots,x_n$ defined on $X$. We may thus use the 
description of the filtration on $\pi_+\cM_r(g^{-\alpha})$ that we have discussed. 

Note that the action of $G$ on $Y$ induces a $G$-action on $\cO_Y(Y)$. Moreover, it 
also induces a $G$-action on $\Gamma(Y,\omega_Y)$
that makes $\omega_Y$ a $G$-equivariant sheaf\footnote{Since $G$ is a finite group and $Y$ is affine, this simply means that the scalar multiplication
$\cO_Y(Y)\times \Gamma(Y,\omega_Y) \to \Gamma(Y,\omega_Y)$ is compatible with the $G$-actions.}.
  Moreover, its subspace of invariant sections is precisely the image of the map
  $$\Gamma(X,\omega_X)\to \Gamma(Y,\omega_Y)$$
  given by the pull-back of differential forms (see \cite[Theorem~1]{Brion}). Since $g$ is a $G$-invariant section of $\cO_Y$, it follows that the sheaf 
  $\cM_r(g^{-\alpha})$
  has an induced structure of $G$-equivariant sheaf; moreover, its subspace of $G$-invariant sections is the image of the pull-back map
  $$\Gamma\big(X,\cM_r(f^{-\alpha})\big)\to \Gamma(Y,\cM_r(g^{-\alpha})\big).$$
  
  We also have an induced $G$-action on $\Gamma(X,\pi_+\cM_r(g^{-\alpha})\big)$. In order to see this, note that we have a natural
  $G$-action on $\Gamma\big(Y\times X,\rho_+\cM_r(g^{-\alpha})\big)=\Gamma\big(Y,\cM_r(g^{-\alpha})\big)\otimes_{\C}\C[\partial_{x_1},\ldots,
  \partial_{x_n}]$, where $G$ acts trivially on $\C[\partial_{x_1},\ldots,\partial_{x_n}]$. The $G$-action on $Y$ induces an action of $G$ on 
  $\Gamma(Y,\cT_Y)$ such that the multiplication map $\rho_+\cM_r(g^{-\alpha})\otimes\cT_Y\to \rho_+\cM_r(g^{-\alpha})$ is compatible with 
  the $G$-actions.
  We thus get an induced $G$-action on the cokernel of this map and thus on $\Gamma(X,\pi_+\cM_r(g^{-\alpha})\big)$ via the isomorphism 
  (\ref{eq_rmk_description_tau}). It also follows from the description of
  the canonical morphism
  $$\tau\colon \Gamma\big(X,\cM_r(f^{-\alpha})\big)\to\Gamma\big(X,\pi_+\cM_r(g^{-\alpha})\big)$$
  in Remark~\ref{rmk_description_tau} that the image of $\tau$ lands in the subspace of $G$-invariant sections. 
  A key point is that due to the fact that $g$ is $G$-invariant, the filtration on $\Gamma\big(Y,\cM_r(g^{-\alpha})\big)$ is preserved by the $G$-action
  and therefore so is the filtration on $\Gamma\big(X,\pi_+\cM_r(g^{-\alpha})\big)$. 
  
  Given a complex vector space $V$ with a $G$-action, we consider the linear map $A=A_V\colon V\to V$ given by
  $A(v)=\frac{1}{|G|}\sum_{g\in G}gv$. Note that $A(v)$ lies in the subspace $V^G$ of invariant elements for every $v\in V$ and $A(v)=v$ if $v\in V^G$. 
  
The inclusion ``$\supseteq$" in (\ref{eq_thm3}) is clear: it follows from Theorem~\ref{thm2_v2} that for every $j$ we have 
$F'_j\cM_r(f^{-\alpha})\subseteq F_j\cM_r(f^{-\alpha})$ and thus
$$F'_{k-i}\cM_r(f^{-\alpha})\cdot F_i\cD_X\subseteq F_{k}\cM_r(f^{-\alpha}).$$
We now prove the reverse inclusion. Given a global section $u$ of $F_{k}\cM_r(f^{-\alpha})$, it follows from Lemma~\ref{lem2_finite_maps}
that $\tau(u)$ is a global section of $F_k\big(\pi_+\cM_r(g^{-\alpha})\big)$. This means that we can find global sections
$w_{\beta}$ of $F_{k-|\beta|}\cM_r(g^{-\alpha})$ for
$\beta\in\Z_{\geq 0}^n$, with $w_{\beta}=0$ for all but finitely many $\beta$, such that 
$$\pi^*(u)\otimes 1-\sum_{\beta}w_{\beta}\otimes \partial_x^{\beta}\in {\rm Im}\big(\rho_+\cM_r(g^{-\alpha})\otimes\cT_Y\to \rho_+\cM_r(g^{-\alpha})\big).$$
If we put $w'_{\beta}=A(w_{\beta})\in \Gamma(Y, F_{k-|\beta|}\cM_r(g^{-\alpha})\big)^G$, we see that
$$\pi^*(u)\otimes 1-\sum_{\beta}w'_{\beta}\otimes \partial_x^{\beta}\in {\rm Im}\big(\rho_+\cM_r(g^{-\alpha})\otimes\cT_Y\to \rho_+\cM_r(g^{-\alpha})\big).$$
Since each $w'_{\beta}$ is $G$-invariant, it follows that we can write $w'_{\beta}=\pi^*(u_{\beta})$ for some
global section $u_{\beta}$ of $F'_{k-|\beta|}\cM_r(f^{-\alpha})$. 
We now claim that for every $\beta$, $\pi^*(u_{\beta})\otimes \partial_x^{\beta}$ and $\pi^*(u_{\beta}\partial_x^{\beta})\otimes 1$
have the same image in $\Gamma\big(X, \pi_+\cM_r(g^{-\alpha})\big)$. If this is the case, it follows that 
$\tau(u)=\tau\left(\sum_{\beta}u_{\beta}\partial_x^{\beta}\right)$.
Since $\tau$ is injective by Lemma~\ref{lem2_finite_maps}, we conclude that $u\in\sum_{i\geq 0}F'_{k-i}\cM_r(f^{-\alpha})\cdot F_i\cD_X$. 

In order to prove the above claim, since $\pi_+\cM_r(g^{-\alpha})$ is an $\cO_X$-module with no torsion by Lemma~\ref{lem1_finite_maps},
it is enough to prove it on some nonempty open subset of $X$. We thus may and will assume that $\pi$ is \'{e}tale. In this case, if $\pi_i=x_i\circ\pi$,
then $\pi_1,\ldots,\pi_n$ give an algebraic system of coordinates on $Y$ and we have a corresponding system of derivations 
$\partial_{\pi_1},\ldots,\partial_{\pi_n}$. 
Of course, arguing by induction on
$|\beta|$, it is enough to show that for every global section $\eta$ of $\cM_r(f^{-\alpha})$, the elements 
$\pi^*(\eta\partial_{x_i})\otimes 1$ and $\pi^*(\eta)\otimes\partial_{x_i}$ in $\rho_+\cM_r(g^{-\alpha})$ 
have the same image in $\pi_+\cM_r(g^{-\alpha})$.
This follows from the fact that by (\ref{eq_action_od_derivations}), the map $\rho_+\cM_r(g^{-\alpha})\otimes\cT_Y\to \rho_+\cM_r(g^{-\alpha})$ maps
$\pi^*(\eta)\otimes\partial_{\pi_i}$ to 
$$\pi^*(\eta)\partial_{\pi_i}\otimes 1-\pi^*(\eta)\otimes \partial_{x_i}=\pi^*(\eta\partial_{x_i})\otimes 1-\pi^*(\eta)\otimes\partial_{x_i}.$$
This completes the proof of (\ref{eq_thm3}) and the last assertion in the theorem is an immediate consequence. 
\end{proof}

We now give a consequence of the results in Theorems~\ref{thm2} and \ref{thm3} to minimal exponents.

\begin{cor}\label{cor_finite_maps}
Let $\pi\colon Y\to X$ be a finite surjective morphism between smooth complex algebraic varieties and let $K_{Y/X}$ be the effective divisor
on $Y$ locally defined by the determinant of a Jacobian matrix of $\pi$. If $0\neq f\in\cO_X(X)$ and $g=f\circ \pi$ both define reduced divisors, then the following hold
for every $P$ in $X$ with $f(P)=0$, every nonnegative integer $p$, and every $\alpha\in\Q\cap (0,1]$:
\begin{enumerate}
\item[i)] If $\cO_Y(-K_{Y/X})\subseteq I_p(g^{\alpha})$ in a neighborhood of the fiber $\pi^{-1}(P)$, then $\widetilde{\alpha}_P(f)\geq p+\alpha$.
\item[ii)] If $\pi$ is Galois and the hypersurface defined by $f$ is not smooth at $P$, then the converse of i) holds: if $\widetilde{\alpha}_P(f)\geq p+\alpha$, then
$\cO_Y(-K_{Y/X})\subseteq I_p(g^{\alpha})$ in a neighborhood of the fiber $\pi^{-1}(P)$.
\end{enumerate}
\end{cor}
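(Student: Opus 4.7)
My plan is to deduce (i) directly from Theorem~\ref{thm2} and to prove (ii) by induction on $p$ using Theorem~\ref{thm3}.

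For (i), I would apply Theorem~\ref{thm2} with $h=1$. The hypothesis $\cO_Y(-K_{Y/X})\subseteq I_p(g^{\alpha})$ in a neighborhood of $\pi^{-1}(P)$ is exactly $\pi^*(1)\cdot\cO_Y(-K_{Y/X})\subseteq I_p(g^{\alpha})$, so the theorem yields $1\in I_p(f^{\alpha})$ in a neighborhood of $P$, i.e.\ $I_p(f^{\alpha})=\cO_X$ near $P$. By (\ref{eq2_equality_Hodge_microlocal}) this is equivalent to $\widetilde{\alpha}_P(f)\geq p+\alpha$.

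For (ii), the first step is to translate to right $\cD$-modules. The hypothesis $\widetilde{\alpha}_P(f)\geq p+\alpha$ and (\ref{eq2_equality_Hodge_microlocal}) give $I_p(f^{\alpha})=\cO_X$ near $P$, so $F_{p-n}\cM_r(f^{-\alpha})=\omega_X\cdot f^{-p}f^{-\alpha}$ locally at $P$. Writing
$$F'_k\cM_r(f^{-\alpha}):=\{u\in\cM_r(f^{-\alpha})\mid \pi^*(u)\in F_k\cM_r(g^{-\alpha})\}$$
as in Theorem~\ref{thm3}, the desired conclusion $\cO_Y(-K_{Y/X})\subseteq I_p(g^{\alpha})$ near $\pi^{-1}(P)$ translates to $F_{p-n}\cM_r(f^{-\alpha})\subseteq F'_{p-n}\cM_r(f^{-\alpha})$ near $P$, since $\pi^*\omega_X$ generates $\omega_Y(-K_{Y/X})$. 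I would argue this by induction on $p$. The base case $p=0$ does not require the Galois hypothesis: both $I_0(f^{\alpha})$ and $I_0(g^{\alpha})$ agree with multiplier ideals $\cJ(f^{\alpha-\epsilon})$, $\cJ(g^{\alpha-\epsilon})$ for $0<\epsilon\ll 1$, and the classical transformation formula \cite[Theorem~9.5.42]{Lazarsfeld} even yields $I_0(g^{\alpha})=\cO_Y(-K_{Y/X})$ near $\pi^{-1}(P)$.

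For the inductive step from $p-1$ to $p$, since $\alpha>0$ the inductive hypothesis gives $\cO_Y(-K_{Y/X})\subseteq I_{p-1}(g^{\alpha})$ near $\pi^{-1}(P)$, which together with Theorem~\ref{thm2_v2} yields $F'_{p-n-1}\cM_r(f^{-\alpha})=F_{p-n-1}\cM_r(f^{-\alpha})$ near $P$. Feeding this into Theorem~\ref{thm3} produces
$$F_{p-n}\cM_r(f^{-\alpha})=F'_{p-n-1}\cM_r(f^{-\alpha})\cdot F_1\cD_X+F'_{p-n}\cM_r(f^{-\alpha})$$
near $P$, so it remains to check $F'_{p-n-1}\cdot F_1\cD_X\subseteq F'_{p-n}$: that is, $\pi^*(uD)\in F_{p-n}\cM_r(g^{-\alpha})$ whenever $u\in F'_{p-n-1}$ and $D\in F_1\cD_X$. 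This is the main obstacle. On the open subset $V\subseteq Y$ where $\pi$ is \'etale, $D$ lifts canonically to a derivation $\widetilde D\in\cD_V$ and $\pi^*(uD)|_V=\pi^*(u)|_V\cdot\widetilde D$ sits in $F_{p-n}\cM_r(g^{-\alpha})|_V$; extending this conclusion across the ramification divisor is where I expect the real work. My plan is to re-run the averaging argument from the proof of Theorem~\ref{thm3} on $uD\in F_{p-n}\cM_r(f^{-\alpha})$ to obtain a decomposition $uD=\sum_\beta u_\beta\partial_x^\beta$ with $u_\beta\in F'_{p-n-|\beta|}$, and then to verify term by term, via the \'etale-locus identity and the $\cO_Y$-torsion-freeness of $\cM_r(g^{-\alpha})$ (in the spirit of Lemma~\ref{lem1_finite_maps}), that the corresponding pullbacks assemble into an element of $F_{p-n}\cM_r(g^{-\alpha})$ globally. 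The assumption that $f$ is not smooth at $P$ enters precisely to exclude the case $\widetilde{\alpha}_P(f)=\infty$, in which ramification of $\pi$ alone may force $g$ to be singular at points of $\pi^{-1}(P)$ even though $f$ is smooth, and the desired inclusion can then genuinely fail.
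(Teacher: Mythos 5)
Your part (i) is exactly the paper's argument: apply Theorem~\ref{thm2} with $h=1$ (after shrinking $X$ to an open set $U\ni P$ with $\pi^{-1}(U)$ inside the given neighborhood of the fiber) and invoke (\ref{eq2_equality_Hodge_microlocal}); nothing to add there.

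Part (ii), however, has a genuine gap, and it sits exactly at the step you flag: the claim $F'_{k-1}\cM_r(f^{-\alpha})\cdot F_1\cD_X\subseteq F'_{k}\cM_r(f^{-\alpha})$. This claim is false in general, and the devices you propose cannot repair it. Torsion-freeness lets you check an identity of sections generically, but membership in $F_k\cM_r(g^{-\alpha})$ cannot be checked on a dense open set: the quotient $\cM_r(g^{-\alpha})/F_k\cM_r(g^{-\alpha})$ has $\cO_Y$-torsion supported on the singular locus of $g$, which in your situation lies inside the ramification divisor, precisely where the lifted vector field has poles (e.g.\ $\partial_t\mapsto \tfrac{1}{2x}\partial_x$ when $t=x^2$). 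More decisively, your induction never uses the hypothesis that $f$ is singular at $P$ (the base case only needs $I_0(f^{\alpha})=\cO_X$ near $P$, and the inductive step is independent of it), so if it worked it would prove (ii) with that hypothesis dropped --- and that statement is false. Take the degree-two Galois cover $\pi(x,y)=(x^2+y^3,\,y)$ of $\A^2$, $f=t$ the first coordinate, so $f$ is smooth, $g=x^2+y^3$ is a reduced cusp, and $K_{Y/X}=\divisor(x)$. With $\alpha=1$ we have $x\in I_0(g)=\frm$, but $x\notin I_1(g)$: since the Hodge filtration on $\cO(*D)$ of a reduced plane curve is generated at level zero, $I_1(g)\subseteq (g)+J_g\cdot I_0(g)\subseteq (x^2,xy,y^3)$. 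So the very first instance of your claim (from ``the twisted pullback of $f^{-1}$ lies in $F_0$'' conclude ``the twisted pullback of $\partial_t\cdot f^{-1}$ lies in $F_1$'', i.e.\ $x\in I_1(g)$) fails, as does (ii) without the singularity hypothesis, even though $\widetilde{\alpha}_P(f)=\infty$.

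The paper's proof of (ii) is structured differently and shows where the hypothesis really enters: there is no induction on $p$ and no attempt to prove that $F'$ is stable under $F_1\cD_X$. One uses $I_p(f^{\alpha})=\cO_X$ near $P$ to write $F_p\cM(f^{-\alpha})=\cO_X\cdot\frac{1}{f^p}f^{-\alpha}$, applies Theorem~\ref{thm3} once to get $F_p=F'_p+F_1\cD_X\cdot F_{p-1}$, and then uses $f,\partial f/\partial x_i\in\frm_P$ (this is exactly where ``not smooth at $P$'' is needed), together with the unconditional inclusion $F_{p-1}\subseteq\cO_X\cdot\frac{1}{f^{p-1}}f^{-\alpha}$, to show $F_1\cD_X\cdot F_{p-1}\subseteq\frm_P\cdot\frac{1}{f^p}f^{-\alpha}$; Nakayama's lemma then forces the generator $\frac{1}{f^p}f^{-\alpha}$ into $F'_p$, which is the desired conclusion. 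In other words, the troublesome derivative term is not absorbed into $F'_p$ (your route) but shown to be negligible modulo $\frm_P$ times the generator; your reading of the hypothesis as merely excluding $\widetilde{\alpha}_P(f)=\infty$ misses this mechanism. A minor further slip: the transformation formula for multiplier ideals gives the containment $\cO_Y(-K_{Y/X})\subseteq I_0(g^{\alpha})$, not the equality you state.
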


\begin{rmk}
Note that a similar assertion holds for log canonical thresholds of arbitrary regular functions in the setting of a finite surjective morphism between smooth varieties, as above:
we have ${\rm lct}_P(f)>\lambda$ if and only if $\cO_Y(-K_{Y/X})$ is contained in the multiplier ideal $\cJ(g^{\lambda})$ in a neighborhood of the fiber
$\pi^{-1}(P)$. This follows from the fact that ${\rm lct}_P(f)>\lambda$ if and only if $\cJ(f^{\lambda})=\cO_X$ in a neighborhood of $P$ and the theorem relating
the multiplier ideals of $f$ and $g$ (see \cite[Theorem~9.5.42]{Lazarsfeld}).
\end{rmk}

\begin{proof}[Proof of Corollary~\ref{cor_finite_maps}]
The first assertion follows directly from Theorem~\ref{thm2} and the characterization of $\widetilde{\alpha}_P(f)$ via the Hodge ideals of $f$ in (\ref{eq2_equality_Hodge_microlocal}). Suppose now that we are in the setting of ii). Using again the characterization of $\widetilde{\alpha}_P(f)$ 
via the Hodge ideals of $f$, we conclude using the hypothesis that
$I_p(f^{\alpha})=\cO_X$ in a neighborhood of $P$. Equivalently, we have
\begin{equation}\label{eq1_cor_finite_maps}
\cO_X\cdot \frac{1}{f^p}f^{-\alpha}=F_p\cM(f^{-\alpha}).
\end{equation}
For every $k\in\Z$, let us put 
$$F'_k\cM(f^{-\alpha})=\left\{u=\frac{h}{f^p}f^{-\alpha}\in F_k\cM(f^{-\alpha})\mid \frac{\pi^*(h)}{g^p}\cO_Y(-K_{Y/X})\cdot g^{-\alpha}
\subseteq F_k\cM(g^{-\alpha})\right\}.$$
It follows from equation (\ref{eq_thm3}) in
Theorem~\ref{thm3} (after passing from right to left $\cD$-modules) that 
$$F_p\cM(f^{-\alpha})=F'_p\cM(f^{-\alpha})+F_1\cD_X\cdot F_{p-1}\cM(f^{-\alpha}).$$

We next show that since $f$ has a singular point at $P$, we have
\begin{equation}\label{eq2_cor_finite_maps}
F_1\cD_X\cdot F_{p-1}\cM(f^{-\alpha})\subseteq\frm_P\cdot \frac{1}{f^p}f^{-\alpha},
\end{equation}
where $\frm_P$ is the ideal of functions vanishing at $P$. In order to see this, let us choose local coordinates $x_1,\ldots,x_n$ centered at $P$.
First, recall that 
$$F_{p-1}\cM(f^{-\alpha})\subseteq \cO_X\cdot \frac{1}{f^{p-1}}f^{-\alpha}\subseteq \frm_P\cdot\frac{1}{f^p}f^{-\alpha},$$
where the second inclusion follows from the fact that $f\in\frm_P$. Moreover, for every $i$ and every $h\in\cO_X$, we have
$$\partial_{x_i}\cdot \frac{h}{f^{p-1}}f^{-\alpha}=\frac{\partial h}{\partial x_i}\cdot\frac{f}{f^p}f^{-\alpha}-
(\alpha+p-1)\frac{\partial f}{\partial x_i}\cdot\frac{h}{f^p}f^{-\alpha}\in  \frm_P\cdot \frac{1}{f^p}f^{-\alpha},$$
where we use the fact that $f,\frac{\partial f}{\partial x_i}\in\frm_P$. This proves (\ref{eq2_cor_finite_maps}).

We thus have the $\cO_X$-submodule $F'_p\cM(f^{-\alpha})$ of $\cO_X\cdot \frac{1}{f^p}f^{-\alpha}$ and
(\ref{eq1_cor_finite_maps}) and (\ref{eq2_cor_finite_maps}) give
$$\cO_X\cdot \frac{1}{f^p}f^{-\alpha}\subseteq F'_p\cM(f^{-\alpha})+\frm_P\cdot \frac{1}{f^p}f^{-\alpha}.$$
We deduce using Nakayama's lemma that $\frac{1}{f^p}f^{-\alpha}\in F'_p\cM(f^{-\alpha})$ around $P$, that is,
$\cO_Y(-K_{Y/X})\subseteq I_p(g^{\alpha})$. 
\end{proof}

\section{Hodge ideals for families with constant Milnor number}

Let $X$ be a smooth $n$-dimensional complex algebraic variety and $f\in\cO_X(X)$ a nonzero regular function.
Let $P\in X$ be a point with $f(P)=0$; we assume that $P$ is a singular point of $f$ and that $f$ has an isolated singularity at $P$.
Recall that $J_f$ denotes the Jacobian ideal of $f$, generated by $\frac{\partial f}{\partial x_1},\ldots,\frac{\partial f}{\partial x_n}$,
where $x_1,\ldots,x_n$ are local coordinates on $X$. Our assumption on $f$ implies that the dimension $\mu=\mu_P(f):=\dim_{\C}(\cO_{X,P}/J_{f,P})$ is a finite
positive number; this is the \emph{Milnor} number of $f$ at $P$. We note that a more natural context for the discussion in this section is that
of holomorphic functions on complex manifolds; however, this is not really that different from the algebraic case since we deal with isolated singularities,
so that we prefer to stick to the algebraic case, as in the rest of the article. 

For basic facts on the spectrum of $f$ and its connection to the $V$-filtration and the mixed Hodge structure on the cohomology of the Milnor fiber of 
$f$ at $P$, we refer to \cite[Section~1]{Saito_et_al} and the references therein. We only recall that if $F_{f,P}$ denotes the Milnor fiber of $f$ at $P$,
then on its cohomology $H^{n-1}(F_{f,P},\Q)$ there is a mixed Hodge structure and a compatible action of the monodromy $T$ (the \emph{inverse} of the Milnor monodromy). Note that $\dim_{\Q}H^{n-1}(F_{f,P},\Q)=\mu$.
 If $T_s$ is the semisimple
part of the monodromy and if $\lambda$ is an eigenvalue of $T_s$ on the above cohomology, then $H^{n-1}(F_{f,P},\C)_{\lambda}$ denotes the corresponding
eigenspace.

The \emph{spectrum} of $f$ at $P$ is a collection of $\mu$ positive rational numbers, not necessarily distinct and indexed non-decreasingly
$\alpha_1,\ldots,\alpha_{\mu}$ such that for every $\beta\in\Q_{>0}$, we have
\begin{equation}\label{eq1_spectrum}
\#\{i\mid\alpha_i=\beta\}=\dim_{\C}{\rm Gr}^q_FH^{n-1}(F_{f,P},\C)_{\lambda},
\end{equation}
where $\lambda={\rm exp}(-2\pi i\beta)$ and $q=n-\lceil \beta\rceil$. Note that $\alpha_1$ appears with multiplicity 1 and it is equal to
$\widetilde{\alpha}_P(f)$. 

Let us review now the connection between the Hodge filtration on $H^{n-1}(F_{f,P},\Q)$ and the $V$-filtration on the Brieskorn lattice. Recall that 
the \emph{Brieskorn lattice} of $f$ at $P$ is
$$H_{f,P}'':=\Omega_{X,P}^n/df\wedge d\Omega_{X,P}^{n-2},$$
where the stalks of the sheaves of differential forms are the analytic ones (associated to the complex manifold $X^{\rm an}$). This has a structure of
free module of rank $\mu$ over $\C\llcurlybracket t\rrcurlybracket$ and over $\C\llcurlybracket \partial_t^{-1}\rrcurlybracket$, where the action of $t$ is given by
$t\cdot [\omega]=[f\omega]$ and the action of $\partial_t^{-1}$ is given by $\partial_t^{-1}\cdot [\omega]=[df\wedge\eta]$, where $d\eta=\omega$. 
The \emph{Gauss-Manin system} of $f$ at $P$ is
$$G_{f,P}:=H''_{f,P}[\partial_t].$$
Note that we have an injective map $H_{f,P}''\hookrightarrow G_{f,P}$ and a surjective map
$$H_{f,P}''\to \Omega^n_{f,P}:=\Omega_{X,P}^n/df\wedge\Omega_{X,P}^{n-1}.$$
A choice of local coordinates $x_1,\ldots,x_n$ at $P$ gives an isomorphism
$$\Omega^n_{f,P}\simeq\cO_{X,P}/J_{f,P}.$$

On the Gauss-Manin system there is a $V$-filtration, similar to the one discussed in Section~2, such that $\partial_tt-\beta$ is nilpotent on
${\rm Gr}_V^{\beta}G_{f,P}$ for all $\beta\in\Q$. This $V$-filtration induces a $V$-filtration on the submodule $H''_{f,P}$ and then a quotient
filtration on $\Omega^n_{f,P}$. An important fact is that for every $\beta\in\Q$, we have an isomorphism
\begin{equation}\label{eq2_spectrum}
{\rm Gr}^{\beta}_V\Omega_{f,P}^n\simeq {\rm Gr}_F^qH^{n-1}(F_{f,P},\C)_{\lambda},
\end{equation}
where $\lambda={\rm exp}(-2\pi i\beta)$ and $q=n-\lceil\beta\rceil$
(see \cite[(1.2.2)]{Saito_et_al}). 

The key result for us is that via the isomorphism $\Omega_{f,P}^n\simeq\cO_{X,P}/J_{f,P}$, the 
quotient $V$-filtration on $\Omega_{f,P}^n$ coincides with the quotient filtration on $\cO_{X,P}/J_{f,P}$
induced by the microlocal $V$-filtration: for every $\beta\in \Q$, the isomorphism identifies
\begin{equation}\label{eq3_spectrum}
V^{\beta}\Omega_{f,P}^n\quad \text{with}\quad (\widetilde{V}^{\beta}\cO_{X,P}+J_{f,P})/J_{f,P}
\end{equation}
(see \cite[Proposition~1.4]{Saito_et_al}). 

Recall now that if $\beta'\leq \widetilde{\alpha}_P(f)+1$, then $J_f\subseteq \widetilde{V}^{\beta'}\cO_X$ around $P$ (see 
Proposition~\ref{eq_Hodge_microlocal}). We thus deduce from (\ref{eq2_spectrum}) and (\ref{eq3_spectrum})
that if $\beta<\widetilde{\alpha}_P(f)+1$, then we have an isomorphism
$$\widetilde{V}^{\beta}\cO_{X,P}/\widetilde{V}^{>\beta}\cO_{X,P}\simeq {\rm Gr}_F^qH^{n-1}(F_{f,P},\C)_{\lambda},$$
where $\lambda={\rm exp}(-2\pi i\beta)$ and $q=n-\lceil\beta\rceil$. We thus obtain the following:

\begin{prop}
If $f$ has an isolated singularity at $P$ and $\beta<\widetilde{\alpha}_P(f)+1$, then $\beta$ is a jumping number for the
microlocal multiplier ideals of $f$ at $P$ (that is, $\widetilde{V}^{\beta}\cO_{X,P}\neq \widetilde{V}^{>\beta}\cO_{X,P}$)
if and only if $\beta$ is in the spectrum of $f$ at $P$. More precisely, the multiplicity of $\beta$ in this spectrum is equal to
$\dim_{\C}(\widetilde{V}^{\beta}\cO_{X,P}/\widetilde{V}^{>\beta}\cO_{X,P})$.
\end{prop}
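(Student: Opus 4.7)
The plan is to assemble the proposition from the ingredients already displayed in the preceding discussion; there is essentially nothing new to prove, only a careful identification of two quotients and a translation of dimensions.

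First I would fix $\beta<\widetilde{\alpha}_P(f)+1$ and, using the fact that the microlocal $V$-filtration is discrete (Remark~\ref{rmk1_microlocal_filtration}), pick $\beta'>\beta$ sufficiently close to $\beta$ so that $\widetilde{V}^{\beta'}\cO_X=\widetilde{V}^{>\beta}\cO_X$ and at the same time $\beta'\leq\widetilde{\alpha}_P(f)+1$. By Proposition~\ref{eq_Hodge_microlocal} applied to both $\beta$ and $\beta'$, we have $J_f\subseteq \widetilde{V}^{\beta'}\cO_X\subseteq \widetilde{V}^{\beta}\cO_X$ in a neighborhood of $P$, so that
$$\widetilde{V}^{\beta}\cO_{X,P}+J_f=\widetilde{V}^{\beta}\cO_{X,P}\and \widetilde{V}^{>\beta}\cO_{X,P}+J_f=\widetilde{V}^{>\beta}\cO_{X,P}.$$

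Next I would invoke the identification in (\ref{eq3_spectrum}): under the isomorphism $\Omega^n_{f,P}\simeq\cO_{X,P}/J_f$ induced by a choice of local coordinates, $V^{\beta}\Omega^n_{f,P}$ corresponds to $\widetilde{V}^{\beta}\cO_{X,P}/J_f$ and similarly for $V^{>\beta}$. Taking the quotient therefore yields a canonical isomorphism
$$\mathrm{Gr}_V^{\beta}\Omega_{f,P}^n\simeq \widetilde{V}^{\beta}\cO_{X,P}/\widetilde{V}^{>\beta}\cO_{X,P}.$$
Composing with the isomorphism (\ref{eq2_spectrum}) gives
$$\widetilde{V}^{\beta}\cO_{X,P}/\widetilde{V}^{>\beta}\cO_{X,P}\simeq \mathrm{Gr}_F^q H^{n-1}(F_{f,P},\C)_{\lambda},$$
with $\lambda=\exp(-2\pi i\beta)$ and $q=n-\lceil\beta\rceil$.

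Finally, by the defining property (\ref{eq1_spectrum}) of the spectrum, the right-hand dimension equals the multiplicity of $\beta$ among $\alpha_1,\ldots,\alpha_{\mu}$, which proves the multiplicity statement; the jumping-number equivalence then follows by taking positivity of this dimension. The only minor obstacle is the choice of $\beta'$ above, needed because Proposition~\ref{eq_Hodge_microlocal} guarantees $J_f\subseteq\widetilde{V}^{\beta'}\cO_X$ only when $\beta'\leq\widetilde{\alpha}_P(f)+1$; the strict inequality $\beta<\widetilde{\alpha}_P(f)+1$ combined with discreteness of the filtration is precisely what makes this choice possible and lets us control $\widetilde{V}^{>\beta}\cO_X$ as well as $\widetilde{V}^{\beta}\cO_X$.
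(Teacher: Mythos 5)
Your proposal is correct and follows essentially the same route as the paper: the paper also deduces the statement by combining the containment $J_f\subseteq\widetilde{V}^{\beta'}\cO_X$ for $\beta'\leq\widetilde{\alpha}_P(f)+1$ (Proposition~\ref{eq_Hodge_microlocal}) with the identifications (\ref{eq2_spectrum}) and (\ref{eq3_spectrum}) to obtain $\widetilde{V}^{\beta}\cO_{X,P}/\widetilde{V}^{>\beta}\cO_{X,P}\simeq \mathrm{Gr}_F^qH^{n-1}(F_{f,P},\C)_{\lambda}$ and then reads off the multiplicity from the definition (\ref{eq1_spectrum}) of the spectrum. Your explicit use of discreteness to choose $\beta'$ just spells out why the strict inequality $\beta<\widetilde{\alpha}_P(f)+1$ suffices, which the paper leaves implicit.
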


We also obtain the result stated in the Introduction.

\begin{proof}[Proof of Theorem~\ref{thm4}]
If $\mu_{s(t)}(f_t)=0$ (that is, if the hypersurface defined by $f_t$ is smooth at $s(t)$), then the assertions in the theorem are trivial,
since all Hodge ideals coincide with the corresponding structure sheaves. Hence from now on we assume that $\mu_{s(t)}(f_t)>0$.
By Varchenko's theorem \cite{Varchenko2}, the constancy of the Milnor number implies that the spectrum of $f_t$ is independent of $t$. 
Since by assumption $p+\gamma\leq\alpha+1$, it follows from Proposition~\ref{eq_Hodge_microlocal} that
$$I_p(f_t^{\gamma})=\widetilde{V}^{p+\gamma}\cO_{{\mathcal X}_t}(f_t)\supseteq J_{f_t}$$
(note that it is enough to check these at $s(t)$, since the hypersurface defined by $f$ is smooth away from this point).
By the definition of the spectrum (see equation (\ref{eq1_spectrum})) and using equations (\ref{eq2_spectrum}) and (\ref{eq3_spectrum}), we conclude that
\begin{equation}\label{eq4_spectrum}
\dim_{\C}\big(\cO_{{\mathcal X}_t}/I_p(f_t^{\gamma})\big)\quad\text{is independent of}\quad t\in T.
\end{equation}

Since each hypersurface defined by every $f_t$ is reduced, it follows that the hypersurface defined by $f$ is reduced as well.
Let ${\mathcal Z}$ be the closed subscheme
of ${\mathcal X}$ defined by $I_p(f^{\gamma})$ and $\tau\colon {\mathcal Z}\to T$ the morphism induced by $\varphi$.  
Note that $\tau$ is finite, being proper, with finite fibers: in fact, the support of ${\mathcal Z}$ is a closed subset of $s(T)$.

Using the
Restriction Theorem for Hodge ideals (see \cite[Theorem~A(vi)]{MP1}), we see that
\begin{equation}\label{eq_conseq_restriction2}
I_p(f_t^{\gamma})\subseteq I_p(f^{\gamma})\cdot\cO_{{\mathcal X}_t}  \quad\text{for all}\quad t\in T,
\end{equation}
with equality for general $t\in T$. 
 For the finite morphism $\tau$, we know that the map
$$T\ni t\mapsto \dim_{\C}\big(\cO_{{\mathcal X}_t}/I_p(f^{\gamma})\cdot \cO_{{\mathcal X}_t}\big)$$
is upper-semicontinuous, and it is constant if and only if $\tau$ is flat.
We thus deduce from (\ref{eq4_spectrum})  and (\ref{eq_conseq_restriction2}) 
that the inclusion in (\ref{eq_conseq_restriction2})
is an equality for all $t\in T$ and that $\tau$ is flat.
This completes the proof of the theorem.
\end{proof}

\section{Proof of Teissier's conjecture}

Before giving the proof of Theorem~\ref{thm_main}, we make some preliminary remarks. We begin by reviewing 
some facts about semicontinuity of minimal exponents, that we will also use in the next section. 

Suppose first that we have a smooth morphism $\varphi\colon {\mathcal X}\to T$ of complex algebraic varieties and $s\colon T\to {\mathcal X}$
is a section of $\varphi$. Suppose that $f\in\cO_{\mathcal X}({\mathcal X})$ is such that for every $t\in T$, the restriction $f_t$ of $f$ to 
${\mathcal X}_t=\varphi^{-1}(t)$ is nonzero. We assume that $f\circ s=0$, hence we may consider $\widetilde{\alpha}_{s(t)}(f_t)$ for all $t\in T$.
In this case, the function 
$$T\ni t\to \widetilde{\alpha}_{s(t)}(f_t)$$
is lower semicontinuous by \cite[Theorem~E(2)]{MP2} (when each $f_t$ has an isolated singularity at $s(t)$, this result was also proved in 
\cite[Theorem~2.11]{Steenbrink}). In fact, the proof in \cite{MP2} shows something stronger: for every $\alpha>0$, the set 
$\{t\in T\mid\widetilde{\alpha}_{s(t)}(f_t)\geq\alpha\}$ is open in $T$. Since a countable intersection of nonempty open subsets is nonempty,
it follows that the set $\{\widetilde{\alpha}_{s(t)}(f_t)\mid t\in T\}$ has a maximum, which is achieved on an open subset of $T$. 

We next make two remarks concerning the hypothesis in Theorem~\ref{thm_main}.

\begin{rmk}\label{rmk_isolated_sing}
In the statement of Theorem~\ref{thm_main}, we may assume also that the hypersurface defined by $f\vert_H$ in $H$ 
has an isolated singularity at $P$.
For this,
it is enough to show that there is a smooth hypersurface $H'$ containing $P$, with the hypersurface defined by
$f\vert_{H'}$ having an isolated singularity at $P$, and
 such that $\widetilde{\alpha}_P(f\vert_{H'})\geq
\widetilde{\alpha}_P(f\vert_H)$. It is clear 
that in the statement of the theorem we may assume that
$X$ is affine and that we have a system of algebraic coordinates $x_1,\ldots,x_n$ on $X$, centered at $P$, such that $H$ is generated by $x_1$.
If $H'$ is defined by $a_1x_1+\ldots+a_nx_n$, where $a_1,\ldots,a_n\in\C$ are general, then it follows from the semicontinuity of minimal exponents
that $\widetilde{\alpha}_P(f\vert_{H'})\geq
\widetilde{\alpha}_P(f\vert_H)$. On the other hand, since the hypersurface $Y$ defined by $f$ has an isolated singularity at $P$,
it is easy to see that also the hypersurface in $H'$ defined by $f\vert_{H'}$ has an isolated 
singularity at $P$. This proves our assertion.
\end{rmk}

\begin{rmk}\label{rmk_case_polynomial}
In order to prove Theorem~\ref{thm_main}, it is enough to consider the case when $X=\A^n$, $P=0$, and $H$ is the hyperplane defined by $x_n=0$.
Indeed, we may assume that $X$ is affine and we have a system of algebraic coordinates $x_1,\ldots,x_n$ centered at $P$ such that $H$ is defined by $x_n$.
In this case, the map $\sigma=(x_1,\ldots,x_n)\colon X\to\A^n$ is \'{e}tale, $\sigma(P)=0$, and $H$ is the inverse image of the hyperplane $H'$, defined
by the vanishing of the last coordinate. By Remark~\ref{rmk_isolated_sing}, we may also assume that $f\vert_H$ has isolated singularities. 
For every $d\geq 1$, there is $f_d\in\cO_{\A^n}(\A^n)$ such that $f-\sigma^*(f_d)\in (x_1,\ldots,x_n)^d$. Since $f$ has an isolated singularity at $P$, it follows that 
$f$ and $\sigma^*(f_d)$ differ by an analytic change of coordinates for $d\gg 0$ (see \cite[Corollary~2.24]{GLS}); since both the minimal exponent and Teissier's 
invariant $\theta_P(f)$  can be computed
by passing to the local ring of the corresponding complex manifold, we have
$$\widetilde{\alpha}_P(f)=\widetilde{\alpha}_P\big(\sigma^*(f_d)\big)\quad\text{and}\quad \theta_P(f)=\theta_P\big(\sigma^*(f_d)\big).$$
The same argument implies that $\widetilde{\alpha}_P(f\vert_H)=\widetilde{\alpha}_P\big(\sigma^*(f_d)\vert_{H}\big)$ for $d\gg 0$.
On the other hand, since $\sigma$ induces a biholomorphic map in a neighborhood of $P$, we also have
$$\widetilde{\alpha}_P\big(\sigma^*(f_d)\big)=\widetilde{\alpha}_0(f_d),\quad\widetilde{\alpha}_P\big(\sigma^*(f_d)\vert_H\big)=\widetilde{\alpha}_0(f_d\vert_{H'}),
\quad\text{and}\quad \theta_P\big(\sigma^*(f_d)\big)=\theta_0(f_d).$$
This completes the proof of our assertion.
\end{rmk}

We can now give the proof of our main result.

\begin{proof}[Proof of Theorem~\ref{thm_main}]
It follows from Remarks~\ref{rmk_isolated_sing} and \ref{rmk_case_polynomial} that we may and will assume that $X=\A^n$, with coordinates
$x_1,\ldots,x_n$, $H$ is the hyperplane defined by $x_n=0$, and $P$ is the origin; moreover, $g=f(x_1,\ldots,x_{n-1},0)$
has an isolated singularity at $0$ in ${\mathbf A}^{n-1}$. 
We choose a positive integer $d$ such that $m:=d\big(\theta_P(f)+1\big)$ is an integer.
The case when the hypersurface defined by $f$ is smooth at $P$ is trivial, since in this case $\widetilde{\alpha}_P(f)=\infty$. Therefore from 
now on we assume that this hypersurface is singular at $P$. We thus have $J_f\subseteq (x_1,\ldots,x_n)$, hence $\theta_P(f)\geq 1$. 
Let $\lambda=\widetilde{\alpha}_P(g)+\frac{1}{\theta_P(f)+1}$, so that we need to show that $\widetilde{\alpha}_P(f)\geq\lambda$. 

If $n=2$, then $\widetilde{\alpha}_P(f)={\rm lct}_P(f)$ since by \cite[Theorem (0.4)]{Saito_microlocal}, we have $\widetilde{\alpha}_P(f) \leq n/2 = 1$. Therefore
 the assertion in the theorem follows from the main result in \cite{EM}. Hence from now on we 
assume $n\geq 3$. In this case, since $g$ has an isolated singularity at $0$, it follows that it is reduced in a neighborhood of $0$.

As we have explained in the Introduction, we follow the approach in \cite{EM}, replacing log canonical thresholds and multiplier ideals by minimal exponents
and Hodge ideals, respectively. For technical reasons, in our setting it is important to consider the following two-parameter family of hypersurfaces:
let 
$$h(x_1,\ldots,x_n,y,z)=f(x_1,\ldots,x_{n-1},yx_n^d)+zx_n^m\in\C[x_1,\ldots,x_n,y,z]$$
and for every $s,t\in \C$ we consider $h_{s,t}=h\vert_{y=s,z=t}\in\C[x_1,\ldots,x_n]$. 
Note that for every $s$ and $t$ we have $h_{s,t}\vert_H=f\vert_H$. First, since $f\vert_H$ is reduced in a neighborhood of $P$,
it follows that every $h_{s,t}$ is reduced in a neighborhood of $P$ and $h$ is reduced in a neighborhood of $\{P\}\times\A^2$.
We can thus use the results in Section~2 for the Hodge ideals of $h_{s,t}$ and $h$ around the respective subsets.
Second, we have
\begin{equation}\label{eq1_thm_main}
\widetilde{\alpha}_P(h_{s,t})\geq\widetilde{\alpha}_P(g)\quad\text{and}\quad\widetilde{\alpha}_{(P,s,t)}(h)\geq \widetilde{\alpha}_P(g)
\quad\text{for all}\quad s,t\in\C
\end{equation}
since the minimal exponent does not go up under restriction to a smooth hypersurface, see \cite[Theorem~E(1)]{MP2}. 
Since $\lambda-\widetilde{\alpha}_P(g)\leq\frac{1}{2}$, we deduce from 
Proposition~\ref{eq_Hodge_microlocal} that if we write 
$$\lambda=p+\gamma,\quad\text{with}\quad p=\lceil\lambda\rceil-1,$$ then
$$
I_p(h_{s,t}^{\gamma})=\widetilde{V}^{\lambda}\cO_{\A^n}(h_{s,t})\quad\text{around}\,\,P\,\,\quad\text{for all}\quad s,t\in\C\quad\text{and}
$$
$$
I_p(h^{\gamma})=
\widetilde{V}^{\lambda}\cO_{\A^{n+2}}(h)\quad\text{around}\quad \{P\}\times\A^2.
$$

We put $I:=I_p(h^{\gamma})\subseteq S=\C[x_1,\ldots,x_n,y,z]$. We consider on $S$ the grading such that
${\rm wt}(x_n)=1$, ${\rm wt}(y)=-d$, ${\rm wt}(z)=-m$, and ${\rm wt}(x_i)=0$ for $1\leq i\leq n-1$, so that $h$ is homogeneous of weight $0$.
This implies that $I$ is a graded ideal of $S$ (since the hypersurface defined by $h$ is preserved by the ${\mathbf C}^*$-action corresponding
to the grading of $S$, the same holds for the Hodge ideals of this hypersurface).

\noindent {\bf Step 1}. We first show that $x_n^{d-1}\in
I_p(h\vert_{y=0}^{\gamma})$
around $\{P\}\times\{0\}\times\A^1$. 
Note that $h\vert_{y=0}=g(x_1,\ldots,x_{n-1})+zx_n^{m}$. We thus deduce from the Thom-Sebastiani theorem for microlocal multiplier ideals
(see \cite[Theorem~2.2]{MSS}) that around $\{P\}\times\{0\}\times \A^1$ we have
\begin{equation}\label{eq2_thm_main}
\widetilde{V}^{\lambda}\cO_{\A^{n+1}}(h\vert_{y=0})=
\sum_{\beta_1+\beta_2=\lambda}\big(\widetilde{V}^{\beta_1}\cO_{\A^{n-1}}(g)\cdot\C[x_1,\ldots,x_n,z]\big)\cdot
\big(\widetilde{V}^{\beta_2}
\cO_{\A^2}(zx_n^{m})\cdot\C[x_1,\ldots,x_n,z]\big).
\end{equation}
By the characterization of the minimal exponent in terms of microlocal multiplier ideals (see equation (\ref{eq1_equality_Hodge_microlocal})), 
we have $\widetilde{V}^{\beta_1}\cO_{\A^{n-1}}(g)
=\C[x_1,\ldots,x_{n-1}]$ around the origin for $\beta_1=\widetilde{\alpha}_0(g)$. On the other hand, if $\beta_2=\frac{1}{\theta_P(f)+1}$,
then $\beta_2<1$ and thus the corresponding microlocal multiplier ideal is a usual multiplier ideal for a simple normal crossing divisor,
which is easy to compute:
$$\widetilde{V}^{\beta_2}\cO_{\A^2}(zx_n^{m})=\cJ\big(\A^2, (zx_n^m)^{\beta_2-\epsilon}\big)\quad\text{for}\quad 0<\epsilon\ll 1$$
$$=\cJ\big(\A^2, z^{\beta_2-\epsilon}x_n^{m(\beta_2-\epsilon)}\big)=(x_n)^{\lfloor m(\beta_2-\epsilon)\rfloor}=(x_n^{d-1})$$
(recall that $m/\big(\theta_P(f)+1\big)=d$). We thus conclude from (\ref{eq2_thm_main}) that around 
$\{P\}\times\{0\}\times\A^1$,
$x_n^{d-1}$ lies in 
$\widetilde{V}^{\lambda}\cO_{\A^{n+1}}(h\vert_{y=0})=I_p(h\vert_{y=0}^{\gamma})$.

\noindent {\bf Step 2}. We next show that there is an open subset $U$ of $\A^2$ such that for every $(s,t)\in U$, we have
$x_n^{d-1}\in I_p(h_{s,t}^{\gamma})$ around $P$. 
Note first that we have 
$$I_p(h\vert_{y=0}^{\gamma})\subseteq I\cdot \cO_{\A^{n+2}}\vert_{y=0}\quad\text{around}\quad \{P\}\times \{0\}\times\A^1.$$
This is a consequence of the Restriction Theorem for Hodge ideals in \cite[Theorem~A(vi)]{MP1}. 
We deduce from Step 1 that $x_n^{d-1}\in I\cdot \cO_{\A^{n+2}}\vert_{y=0}$ around $\{P\}\times\{0\}\times\A^1$. 
This implies that there is a polynomial $q\in\C[x_1,\ldots,x_n,z]$ such that $q\cdot x_n^{d-1}\in I\cdot \cO_{\A^{n+2}}\vert_{y=0}$ 
and $1-q\in (x_1,\ldots,x_n)\C[x_1,\ldots,x_n,z]$. Indeed, for every $a\in\C$ we have a polynomial $q_a\in\C[x_1,\ldots,x_n,z]$ such that $q_a\cdot x_n^{d-1}\in
I\cdot \cO_{\A^{n+2}}\vert_{y=0}$ and $q_a(0,a)\neq 0$. If we write $q_a=q'_a+q''_a$, with 
$$q'_a\in (x_1,\ldots,x_n)\C[x_1,\ldots,x_n,z]\quad \text{and}\quad q''_a\in\C[z],$$
we see that the gcd of the $q''_a$ is $1$, hence they generate the unit ideal. This immediately implies the existence of $q$ as asserted.

We can thus find polynomials $Q_1,Q_2\in S$, with $Q_1\in I$, such that
\begin{equation}\label{eq11_thm_main}
q\cdot x_n^{d-1}=Q_1+yQ_2.
\end{equation}
If we use the grading that we defined on $S$ and for any polynomial $Q\in S$, we denote by $Q_j$ the homogeneous component of $Q$
of weight $j$, then we can write
$$q_0=1+\sum_{i\geq 0}z^ix_n^{mi}u_i(x_1,\ldots,x_{n-1}),\quad\text{with}\quad u_0\in (x_1,\ldots,x_{n-1})\C[x_1,\ldots,x_{n-1}], \quad\text{and}$$
$$(yQ_2)_{d-1}=y\cdot \sum_{j,k\geq 0}z^jy^kx_n^{mj+d(k+2)-1}v_{j,k}(x_1,\ldots,x_{n-1}).$$
Since $I$ is graded, it follows from (\ref{eq11_thm_main}) that $q_0\cdot x_n^{d-1}-(yQ_2)_{d-1}\in I$. We thus see that if
$$R=1+\sum_{i\geq 0}z^ix_n^{mi}u_i(x_1,\ldots,x_{n-1})-y\cdot\sum_{j,k\geq 0}z^jy^kx_n^{d(k+1)+mj}v_{j,k}(x_1,\ldots,x_{n-1}),$$
then $R(0,y,z)=1$ and $R\cdot x_n^{d-1}\in I$. 

The second half of the Restriction Theorem for Hodge ideals (see \cite[Theorem~A(vi)]{MP1}) says that there is an open subset $U$ of $\A^2$
such that for every $(s,t)\in U$, we have
$$I_p(h_{s,t}^{\gamma})=I\cdot\cO_{\A^{n+2}}\vert_{y=s,z=t}.$$
Since $R(P,s,t)\neq 0$, we conclude that for all such $(s,t)$, we have $x_n^{d-1}\in I_p(h_{s,t}^{\gamma})$ around $P$.

\noindent {\bf Step 3}. We now prove that $x_n^{d-1}\in I_p(h_{1,0}^{\gamma})$ around $P$. Note first that
 the condition that $m\geq d\big(\theta_P(f)+1\big)$ implies that there is an open neighborhood $V$ of 
$(1,0)\in\A^2$ and an integer $\mu$ such that for every $(s,t)\in V$, the hypersurface $h_{s,t}$ has an isolated singularity at $P$,
with Milnor number $\mu$. Indeed, for every $(s,t)\in\A^2$, with $s\neq 0$, an easy change of variable 
implies that $h_{s^d,t}$ has the same Milnor number at $P$ as
\begin{equation}\label{eq13_main_thm}
f(x_1,\ldots,x_{n-1},x_n^d)+\frac{t}{s^m}x_n^m.
\end{equation}
It follows from the proof of \cite[Lemma~2.10]{EM} 
that the inequality $m\geq d\big(\theta_P(f)+1\big)$ implies the existence of an open neighborhood $V_0$ of $0$ in $\A^1$ such that
the Milnor number at $P$ of the hypersurface in (\ref{eq13_main_thm}) is finite and independent of $t/s^m\in V_0$. If $V$ is the image of the open set
$\{(s,t)\mid s\neq 0,t/s^m\in V_0\}$ via the map ${\mathbf A}^2\to {\mathbf A}^2$ that maps $(s,t)$ to $(s^d,t)$,
then $V$ satisfies the required property (note that this map is open, being flat).

It is then easy to see that there is an open neighborhood $W$ of $\{P\}\times V$ in $\A^n\times V$ such that for every $(s,t)\in V$, the only singular
point of $h_{s,t}$ in $W\cap \big(\A^n\times\{(s,t)\}\big)$ is $(P,s,t)$ (see for example \cite[Proposition~2.9(ii)]{EM}). 
Let $Z$ be the closed subscheme
of $W$ defined by $I\cdot\cO_W$ and $\tau\colon Z\to V$ the morphism induced by the projection $\A^n\times\A^2\to\A^2$.  
It follows from our choice of $W$ that we can apply 
Theorem~\ref{thm4} to the morphism $\tau$ and to the function $h\vert_W$
to conclude 
that $\tau$ is a finite flat morphism and we have
\begin{equation}\label{eq_conseq_restriction}
I_p(h_{s,t}^{\gamma})=I\cdot\cO_W\vert_{y=s,z=t}\quad\text{for all}\quad (s,t)\in V.
\end{equation}

Since $V$ is an irreducible variety and $\tau$ is finite and flat, the fact that $x_n^{d-1}\in I_p(h_{s,t}^{\gamma})$ for all $(s,t)\in U\cap V$ gives $x_n^{d-1}\in I\cdot\cO_W$ on $W$. Indeed, 
recall first that by Step 2, for every $(s,t)\in U\cap V$, we have $x_n^{d-1}\in I\cdot\cO_W\vert_{y=s,z=t}$ (a priori, we only know this around $P$, but $P$ is the only singular point of
$h_{s,t}$ in $W\cap \big(\A^n\times\{(s,t)\}\big)$). Since 
$\tau$ is finite and flat, this implies\footnote{Note that if $A$ is a reduced algebra of finite type over ${\mathbf C}$ and $B$ is a finite flat $A$-algebra,
then the injective homomorphism $A\hookrightarrow \prod_{\frm\in {\rm Max}(A)}A/\frm$ induces an injective homomorphism
$B\hookrightarrow\prod_{\frm\in {\rm Max}(A)}B/\frm B$.} that
$x_n^{d-1}\vert_{\tau^{-1}(U\cap V)}=0$. 
Using the fact that $V$ is a reduced scheme and $\tau$ is flat, we now deduce that $x_n^{d-1}\vert_Z=0$.
In particular, the equality in (\ref{eq_conseq_restriction}) for $(s,t)=(1,0)$ gives $x_n^{d-1}\in I_p(h_{1,0}^{\gamma})$ around $P$.

We can now conclude. Note that
$h_{1,0}(x_1,\ldots,x_n)=f(x_1,\ldots,x_{n-1},x_n^d)$. Consider the finite morphism 
$\varphi\colon \A^n\to\A^n$ given by $\varphi(x_1,\ldots,x_n)=(x_1,\ldots,x_{n-1},x_n^d)$. 
Since $\varphi^*(f)=h_{1,0}$ and 
$P$ is the only point in the fiber of $\varphi$ over $P$, it follows from Step 3 that we may
apply Theorem~\ref{thm2} for the restriction of $\varphi$ over a suitable neighborhood of $P$ 
to conclude that $1\in I_p(f^{\gamma})$ around $P$.
We thus have $\widetilde{\alpha}_P(f)\geq p+\gamma=\lambda$ by the characterization of the minimal exponent in terms of Hodge ideals
(see equation (\ref{eq2_equality_Hodge_microlocal})).
This completes the proof of the theorem.
\end{proof}

\begin{proof}[Proof of Corollary~\ref{cor_thm_main}]
Since $H_1,\ldots,H_{n-1}$ are general smooth
hypersurfaces in $X$ containing $P$, it follows that each $Z_i:=H_1\cap\ldots\cap H_i$ is smooth and $f\vert_{Z_i}$ has an isolated singularity at $P$, for 
$1\leq i\leq n-1$. We can thus apply Theorem~\ref{thm_main} to each of $f, f\vert_{Z_1},\ldots, f\vert_{Z_{n-2}}$ to conclude that
$$\widetilde{\alpha}_P(f)\geq\frac{1}{\theta_P(f)+1}+\frac{1}{\theta_P(f\vert_{Z_1})+1}+\ldots+\frac{1}{\theta_P(f\vert_{Z_{n-2}})+1}
+\widetilde{\alpha}_P(f\vert_{Z_{n-1}}).$$
Note now that $Z_{n-1}$ is a smooth curve. If $m={\rm mult}_P(f\vert_{Z_{n-1}})$, then $\widetilde{\alpha}(f\vert_{Z_{n-1}})=\frac{1}{m}=
\frac{1}{\theta_P(f\vert_{Z_{n-1}})+1}$. We thus obtain the inequality in the statement of the corollary.
\end{proof}

\section{A lower bound for the minimal exponent of general hyperplane sections}

Our goal in this section is to prove Theorem~\ref{thm5}. We begin with a few comments regarding what we mean by 
restriction to a \emph{general hypersurface}. Let $P$ be a point on a smooth complex algebraic variety $X$. We consider 
a system of regular functions
$y_1,\ldots,y_N$ defined on an open neighborhood of $P$ and whose images in the local ring $\cO_{X,P}$ generate the maximal ideal. 
If $H$ is a hypersurface in $X$ defined around $P$ by a linear combination $\sum_{i=1}^Na_iy_i$, with $a_1,\ldots,a_N\in\C$ general,
we refer to $H$ as a general hypersurface in $X$ containing $P$. It is clear that such a hypersurface is smooth at $P$.

If $f\in\cO_X(X)$ is such that $f(P)=0$, then it follows from the semicontinuity statement for minimal exponents discussed at the beginning of the previous section
that for such a general $H$, the minimal exponent $\widetilde{\alpha}_P(f\vert_H)$ is independent of $H$. Moreover, this is the largest of all minimal exponents
$\widetilde{\alpha}_P(f\vert_{H'})$, where $H'$ is \emph{any} hypersurface in $X$ that is smooth at $P$, and with $f\vert_{H'}\neq 0$ (this follows by enlarging the given system of generators
of the maximal ideal of $\cO_{X,P}$ with the germ of an equation defining $H'$ around $P$). 

We can now prove the lower bound for the minimal exponent for the restriction to a general hypersurface containing $P$.

\begin{proof}[Proof of Theorem~\ref{thm5}]
Let $d={\rm mult}_P(f)$. The assertion in the theorem is clear if the hypersurface defined by $f$ is smooth at $P$:
indeed, in this case also the hypersurface defined by $f\vert_H$ in $H$ is smooth, hence $\widetilde{\alpha}_P(f\vert_H)=\infty
=\widetilde{\alpha}_P(f)$. Therefore from now on we may assume $d\geq 2$.

Suppose first that we know the assertion when $f$ has an isolated singularity at $P$. In order to deduce the general case,
we may and will assume that $X$ is affine and we have an algebraic system of coordinates $x_1,\ldots,x_n$ on $X$,
centered at $P$. It follows from the discussion at the beginning of this section that it is enough to exhibit one smooth hypersurface
$H$ containing $P$ such that $\widetilde{\alpha}_P(f\vert_H)\geq\widetilde{\alpha}_P(f)-\frac{1}{d}$. 
For every $m>d$, consider
$$f_m=f+\sum_{i=1}^na_{i,m}x_i^m,$$
where $a_{1,m},\ldots,a_{n,m}\in\C$ are general. Note that by the Kleiman-Bertini theorem
the hypersurface defined by $f_m$ has at most one singular point, namely $P$. 
Note also that ${\rm mult}_P(f_m)=d$.
By the isolated singularity case, it follows that if
$H$ is general (depending on $f_m$), then
$$\widetilde{\alpha}_P(f_m\vert_H)\geq \widetilde{\alpha}_P(f_m)-\frac{1}{d}.$$
Since the intersection of a countable family of nonempty Zariski open subsets of an irreducible complex algebraic variety is nonempty,
it follows that we can choose such a smooth hypersurface $H$ that satisfies these conditions for all $m$.
Since we have
$$\lim_{m\to\infty}\widetilde{\alpha}_P(f_m)=\widetilde{\alpha}_P(f)\quad\text{and}\quad \lim_{m\to\infty}\widetilde{\alpha}_P(f_m\vert_H)
=\widetilde{\alpha}_P(f\vert_H)$$
by \cite[Proposition~6.6(3)]{MP2}, we deduce that
$$\widetilde{\alpha}_P(f\vert_H)\geq \widetilde{\alpha}_P(f)-\frac{1}{d}.$$
Therefore $f$ satisfies the assertion in the theorem.

We thus see that it is enough to treat the case when $f$ has an isolated singularity at $P$. This follows from a sharper inequality
proved by Loeser in \cite{Loeser}. For the benefit of the reader, we include a slightly modified version of his proof, explaining in detail
how various arguments from \cite{Teissier1} come in the picture. 

Arguing as in Remark~\ref{rmk_case_polynomial}, we see that it is enough to consider the case when $X=\A^n$ and $P=0$. After a suitable choice of
coordinates $x_1,\ldots,x_n$, we may assume that $H$ is the hyperplane defined by $x_n=0$. Consider the following family
of polynomials 
$$h_t(x_1,\ldots,x_n)=f(x_1,\ldots,x_{n-1},tx_n)+(1-t)x_n^d,\quad\text{for}\quad t\in\C.$$
Note that $h_0=g+x_n^d$, where $g=f(x_1,\ldots,x_{n-1},0)$, hence the Thom-Sebastiani formula for the minimal exponent gives
$$\widetilde{\alpha}_0(h_0)=\widetilde{\alpha}_0(g)+\frac{1}{d}$$
(see \cite[Example~(6.8)]{Malgrange} or \cite[Example~6.7]{MP2}).
On the other hand, we have $h_1=f$, hence by the semicontinuity of the minimal exponent (see the discussion at the beginning of the previous section)
there is a Zariski open neighborhood $U$ of $1$ such that $\widetilde{\alpha}_0(h_t)\geq \widetilde{\alpha}_0(f)$ for all $t\in U$. The key point is to show that
there is an open neighborhood $V$ of $0$ such that for $t\in V$, $h_t$ has an isolated singularity at $0$ and the Milnor number is constant.
Indeed, in this case Varchenko's theorem \cite{Varchenko2} implies that for $t\in V$, we have 
$$\widetilde{\alpha}_0(h_t)=\widetilde{\alpha}_0(h_0)
=\widetilde{\alpha}_0(g)+\frac{1}{d}.$$
By taking $t\in U\cap V$, we thus obtain 
$$\widetilde{\alpha}_0(f\vert_H)=\widetilde{\alpha}_0(g)\geq\widetilde{\alpha}_0(f)-\frac{1}{d}.$$

Note that the Milnor number of $h_0$ at $0$ is $\mu_0(g+x_n^{d-1})=(d-1)\cdot \mu_0(g)$. 
Suppose now that $t\neq 0$ and we want to compute the Milnor number $\mu_t$ of $h_t$ at $0$ for general such $t$.
Note that since $h_0$ has an isolated singularity at $0$, the same holds for $h_t$, with $t$ general.

An easy change of variable gives
$$\mu_t=\dim_{\C}(\cO_{\A^n,0}/J_t),\quad\text{where}\quad
J_t=(\partial f/\partial x_1,\ldots,\partial f/\partial x_{n-1},dsx_n^{d-1}+\partial f/\partial x_n)\cdot\cO_{\A^n,0}$$
with $s=\frac{1-t}{t^d}$. We will make use of various facts about Hilbert-Samuel multiplicities, for which we refer to 
\cite[Chapter~14]{Matsumura}) and, more generally, of mixed multiplicities, for which we refer to \cite{Teissier1} or
\cite{Swanson}. Note that since $h_t$ has an isolated singularity at $0$, the elements
$\frac{\partial f}{\partial x_1},\ldots,\frac{\partial f}{\partial x_{n-1}}, dsx_n^{d-1}+\frac{\partial f}{\partial x_{n}}$ form a system of parameters,
hence a regular sequence, in the Cohen-Macaulay ring $\cO_{\A^n,0}$. It follows that if $\Gamma$ is defined by 
$\frac{\partial f}{\partial x_1},\ldots,\frac{\partial f}{\partial x_{n-1}}$, then its local ring $\cO_{\Gamma,0}$ is Cohen-Macaulay and
\begin{equation}\label{eq1_thm5}
\mu_t=e(J_t;\cO_{\A^n,0})=e\big((dsx_n^{d-1}+\partial f/\partial x_n)\cdot\cO_{\Gamma,0};\cO_{\Gamma,0}\big)
\end{equation}
(see \cite[Theorem~14.11]{Matsumura}). 
On the other hand, since $t$ is general, $s$ is general too, hence
\begin{equation}\label{eq2_thm5}
e\big((sdx_n^{d-1}+\partial f/\partial x_n)\cdot\cO_{\Gamma,0};\cO_{\Gamma,0}\big)=e\big((x_n^{d-1},\partial f/\partial x_n)\cdot\cO_{\Gamma,0};
\cO_{\Gamma,0}\big)
\end{equation}
(see \cite[Theorems~14.13 and 14.14]{Matsumura}). 
Using again the fact that $\frac{\partial f}{\partial x_1},\ldots,\frac{\partial f}{\partial x_{n-1}}$ form a regular sequence and
the definition of the Milnor number, we see that
\begin{equation}\label{eq3_thm5}
\mu_0(g)=\dim_{\C}\big(\cO_{\A^n,0}/(x_n,\partial f/\partial x_1,\ldots,\partial f/\partial x_{n-1})\big)=e\big((x_n)\cdot\cO_{\Gamma,0}; \cO_{\Gamma,0}\big).
\end{equation}

The fact that $H$ is general is used in two ways. First, the Jacobian $J(f\vert_H)$ of $f\vert_H$ and the restriction of $J_f$ to $\cO_{H,0}$
have the same integral closure (see \cite[Proposition~2.7]{Teissier1}). This implies that
\begin{equation}\label{eq4_thm5}
\mu_0(f\vert_H)=e(J_{f\vert_H};\cO_{H,0})=e(J_f\cdot \cO_{H,0};\cO_{H,0}\big).
\end{equation}
On the other hand, since $H$ is general, a basic property of mixed multiplicities (see \cite[Corollary~2.2]{Teissier1} or \cite[Theorem~2.5]{Swanson})
gives
\begin{equation}\label{eq5_thm5}
e(J_f\cdot\cO_{H,0};\cO_{H,0})=e(J_f^{[n-1]},\frm^{[1]};\cO_{\A^n,0})=e(\frm\cdot \cO_{\Gamma,0}; \cO_{\Gamma,0}),
\end{equation}
where $\frm$ is the maximal ideal in $\cO_{\A^n,0}$ (for the last equality, note that since $H$ is general, $\partial f/\partial x_1,\ldots,\partial f/\partial x_{n-1}$
are general linear combinations of a system of generators of $J_f$). 
By combining (\ref{eq4_thm5}) and (\ref{eq5_thm5}), we conclude that 
$\mu_0(g)=e(\frm\cdot \cO_{\Gamma,0}; \cO_{\Gamma,0})$ and using also (\ref{eq3_thm5}), it follows that
$$e\big((x_n)\cdot\cO_{\Gamma,0}; \cO_{\Gamma,0}\big)=e(\frm\cdot \cO_{\Gamma,0}; \cO_{\Gamma,0}).$$
The completion of $\cO_{\Gamma,0}$ has all its minimal primes of the same dimension (in fact, it is Cohen-Macaulay),
hence we can apply 
a theorem of Rees \cite{Rees} to conclude that the ideals $(x_n)\cdot\cO_{\Gamma,0}\subseteq \frm\cdot \cO_{\Gamma,0}$
have the same integral closure. Since $d={\rm mult}_0(f)$, it follows that
$\partial f/\partial x_n\in\frm^{d-1}$ and we see that the ideals
$(x_n^{d-1},\partial f/\partial x_n)\cdot\cO_{\Gamma,0}$ and $\frm^{d-1}\cdot\cO_{\Gamma,0}$ have the same integral closure. 
Therefore 
$$e\big((x_n^{d-1},\partial f/\partial x_n)\cdot\cO_{\Gamma,0};
\cO_{\Gamma,0}\big)=e\big(\frm^{d-1}\cdot\cO_{\Gamma,0};\cO_{\Gamma,0})=(d-1)\cdot e\big(\frm\cdot\cO_{\Gamma,0};\cO_{\Gamma,0}).$$
Using now (\ref{eq1_thm5}), (\ref{eq2_thm5}), and (\ref{eq3_thm5}), we conclude that for $t$ general, we have
$\mu_t=(d-1)\cdot \mu_0(g)=\mu_0(h_0)$. This completes the proof of the theorem.
\end{proof}

\begin{cor}\label{cor_thm5}
Let $X$ be a smooth complex algebraic variety of dimension $n$ and $P$ a point in $X$. Let $f\in\cO_X(X)$ be nonzero such that $f(P)=0$
and let $d={\rm mult}_P(f)$.
If $\widetilde{\alpha}_P(f)>1+\frac{r}{d}$, for some $r\leq n-1$ and 
$H_1,\ldots,H_r$ are general hypersurfaces in $X$ containing $P$, then
the hypersurface of $Y=H_1\cap\ldots\cap H_r$ defined by $f\vert_Y$ has rational singularities at $P$.
\end{cor}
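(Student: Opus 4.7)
The plan is to apply Theorem~\ref{thm5} iteratively, taking advantage of the fact that the multiplicity of $f$ at $P$ is preserved under restriction to a general hypersurface, and then invoke Saito's characterization of rational singularities via the minimal exponent.

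First I would observe that for a general hypersurface $H$ in $X$ containing $P$, we have $\mult_P(f\vert_H) = \mult_P(f) = d$. This is because if $f_d$ denotes the initial form of $f$ in a choice of local coordinates at $P$, then $f_d\vert_H$ is a nonzero homogeneous polynomial of degree $d$ on $T_PH$ for general $H$, so the multiplicity does not drop. Iterating, for $Y_i := H_1 \cap \ldots \cap H_i$ with $H_1,\ldots,H_i$ general we still have $\mult_P(f\vert_{Y_i}) = d$ for each $i \leq r$. Also, restricting a general hypersurface of $X$ through $P$ to $Y_i$ gives a general hypersurface of $Y_i$ through $P$ in the sense defined at the beginning of Section~6, since the restrictions of a generating set of $\mathfrak{m}_{P,X}$ (omitting the equations of $H_1,\ldots,H_i$) generate $\mathfrak{m}_{P,Y_i}$ and a general linear combination in $X$ specializes to a general linear combination in $Y_i$.

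Next I would apply Theorem~\ref{thm5} recursively. Setting $Y_0 = X$ and $Y_i = Y_{i-1} \cap H_i$, Theorem~\ref{thm5} applied to $f\vert_{Y_{i-1}}$ on $Y_{i-1}$, with the general hypersurface $Y_i \subset Y_{i-1}$, gives
\begin{equation*}
\widetilde{\alpha}_P(f\vert_{Y_i}) \geq \widetilde{\alpha}_P(f\vert_{Y_{i-1}}) - \frac{1}{\mult_P(f\vert_{Y_{i-1}})} = \widetilde{\alpha}_P(f\vert_{Y_{i-1}}) - \frac{1}{d}.
\end{equation*}
Chaining these $r$ inequalities yields
\begin{equation*}
\widetilde{\alpha}_P(f\vert_Y) \geq \widetilde{\alpha}_P(f) - \frac{r}{d} > 1,
\end{equation*}
by the hypothesis on $\widetilde{\alpha}_P(f)$. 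By Saito's result \cite[Theorem~0.4]{Saito-B} mentioned in the Introduction, this inequality means precisely that the hypersurface defined by $f\vert_Y$ in $Y$ has rational singularities at $P$.

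The only mildly subtle point is the joint genericity: each step of the recursion requires $H_i$ to be general with respect to the function $f\vert_{Y_{i-1}}$, which itself depends on the previous choices. This is handled by noting that the condition "general" in Theorem~\ref{thm5} corresponds to a Zariski dense open subset of the parameter space of hypersurfaces through $P$; one simply chooses $H_1$ in the dense open locus relative to $f$, then $H_2$ in the dense open locus relative to $f\vert_{H_1}$, and so on, which can be arranged simultaneously by a standard constructibility argument. No hard step is involved; the corollary is essentially a direct consequence of Theorem~\ref{thm5} and Saito's characterization.
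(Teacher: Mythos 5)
Your proposal is correct and follows essentially the same route as the paper: the paper's proof is precisely a repeated application of Theorem~\ref{thm5} to obtain $\widetilde{\alpha}_P(f\vert_Y)>1$, followed by Saito's criterion \cite[Theorem~0.4]{Saito-B}. Your additional remarks (preservation of multiplicity under general restriction, which in fact is only needed in the form $\mathrm{mult}_P(f\vert_{Y_{i-1}})\geq d$ so that each step loses at most $1/d$, and the iterated genericity of the $H_i$) just make explicit details the paper leaves implicit.
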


\begin{proof}
A repeated application of Theorem~\ref{thm5} gives $\widetilde{\alpha}_P(f\vert_Y)>1$. This implies that the hypersurface in $Y$ defined
by $f\vert_Y$ has rational singularities at $P$ by \cite[Theorem~0.4]{Saito-B}.
\end{proof}

\begin{eg}
Let $n\geq 2$ and let $f={\rm det}(x_{i,j})_{1\leq i,j\leq n}$ be the determinant of an $n\times n$ matrix of indeterminates. 
In this case the reduced Bernstein-Sato polynomial of $f$ (at $0$) is given by 
$$\widetilde{b}_f(s)=\prod_{i=2}^n(s+i)$$
(see for example \cite[Appendix]{Kimura}). We thus have $\widetilde{\alpha}_0(f)=2$.
Since ${\rm mult}_0(f)=n$, it follows from Corollary~\ref{cor_thm5} that if $L\subseteq\A^{n^2}$ is a general
linear subspace containing $0$, of codimension $<n$, then the restriction $f\vert_L$ defines a hypersurface with rational 
singularities (note that $f\vert_L$ is a homogeneous polynomial, hence having rational singularities at $0$ implies rational singularities everywhere).
\end{eg}

\begin{cor}\label{cor_max_min_exp}
Let $X$ be a smooth complex algebraic variety of dimension $n$ and $P$ a point in $X$. If $f\in\cO_X(X)$ is nonzero and $P$ is a singular
point of the hypersurface $Y$ defined by $f$, then the following are equivalent:
\begin{enumerate}
\item[i)] We have $\widetilde{\alpha}_P(f)=\frac{n}{2}$.
\item[ii)] The tangent cone $C_P(f)$ of $Y$ at $P$ is a 
quadric cone of rank $n$.
\item[iii)] There are analytic local coordinates $x_1,\ldots,x_n$ on $X$ centered at $P$ such that $f=\sum_{i=1}^nx_i^2$. 

\end{enumerate}
\end{cor}

\begin{proof}
The equivalence between ii) and iii) is well-known: for example, it is a consequence of the Morse lemma. 
Note also that if $C_P(f)$ is a 
quadric cone of rank $n$, then it is well-known that $\widetilde{\alpha}_P(f)=\frac{n}{2}$ (see, for example, \cite[Theorem~E(3)]{MP2}).
Therefore it is enough to prove the converse.

Recall that since $P$ is a singular point of $Y$, we always have $\widetilde{\alpha}_P(f)\leq\frac{n}{2}$ by \cite[Theorem (0.4)]{Saito_microlocal}. 
We prove by induction
on $n\geq 1$ that if
$\widetilde{\alpha}_P(f)=\frac{n}{2}$, then $f$ satisfies the condition in ii). The case $n=1$ is trivial.
For the induction step, suppose that $n\geq 2$ and that we know the assertion for $n-1$.
If $\widetilde{\alpha}_P(f)=\frac{n}{2}$ and $d={\rm mult}_P(f)$, then for a general hypersurface
$H$ in $X$ containing $P$, it follows from Theorem~\ref{thm5} that
$$\widetilde{\alpha}_P(f\vert_H)\geq \widetilde{\alpha}_P(f)-\frac{1}{d}= \frac{n}{2}-\frac{1}{d}\geq \frac{n-1}{2}.$$
Since $\widetilde{\alpha}_P(f\vert_H)\leq\frac{n-1}{2}$, we conclude that $d=2$ and $\widetilde{\alpha}_P(f\vert_H)=\frac{n-1}{2}$.
By the induction hypothesis, it follows that $C_P(f\vert_H)$ is a quadric cone of rank $n-1$. In particular, we deduce that
$C_P(f)$ is a quadric cone of rank $\geq n-1$. By the Morse lemma, we conclude that there are local analytic 
coordinates $x_1,\ldots,x_n$ on $X$ centered at $P$ such that $f=x_1^m+\sum_{i=2}^nx_i^2$ for some $m\geq 2$. 
In this case the Thom-Sebastiani formula gives $\widetilde{\alpha}_P(f)=\frac{1}{m}+\frac{n-1}{2}$. Since 
$\widetilde{\alpha}_P(f)=\frac{n}{2}$, we deduce that $m=2$,
completing the proof of the induction step. 
\end{proof}

\section*{References}
\begin{biblist}

\bib{AGZV}{book}{
   author={Arnol\cprime d, V. I.},
   author={Guse\u{\i}n-Zade, S. M.},
   author={Varchenko, A. N.},
   title={Singularities of differentiable maps. Vol. II},
   series={Monographs in Mathematics},
   volume={83},
   note={Monodromy and asymptotics of integrals;
   Translated from the Russian by Hugh Porteous;
   Translation revised by the authors and James Montaldi},
   publisher={Birkh\"{a}user Boston, Inc., Boston, MA},
   date={1988},
   pages={viii+492},
 }

\bib{Bjork}{book}{
   author={Bj\"{o}rk, J.-E.},
   title={Analytic ${\cD}$-modules and applications},
   series={Mathematics and its Applications},
   volume={247},
   publisher={Kluwer Academic Publishers Group, Dordrecht},
   date={1993},
   pages={xiv+581},
  
}
\bib{Brion}{article}{
   author={Brion, M.},
   title={Differential forms on quotients by reductive group actions},
   journal={Proc. Amer. Math. Soc.},
   volume={126},
   date={1998},
   number={9},
   pages={2535--2539},
}

\bib{BS}{article}{
 author = {Budur, N.},
 author={Saito, M.},
 title = {Multiplier ideals, {$V$}-filtration, and spectrum},
  journal = {J. Algebraic Geom.},
  volume = {14},
      date= {2005},
    number= {2},
     pages = {269--282},
}

\bib{ELSV}{article}{
   author={Ein, L.},
   author={Lazarsfeld, R.},
   author={Smith, K. E.},
   author={Varolin, D.},
   title={Jumping coefficients of multiplier ideals},
   journal={Duke Math. J.},
   volume={123},
   date={2004},
   number={3},
   pages={469--506},
  }

\bib{EM}{article}{
  author={Elduque, E.},
  author= {Musta\c{t}\u{a}, M.},
  title={On a conjecture of Teissier: the case of log canonical thresholds},
  journal={Sb. Math},
  volume={212},
  date={2021},
  number={3},
  pages={433--448},
  }

\bib{GLS}{book}{
   author={Greuel, G.-M.},
   author={Lossen, C.},
   author={Shustin, E.},
   title={Introduction to singularities and deformations},
   series={Springer Monographs in Mathematics},
   publisher={Springer, Berlin},
   date={2007},
   pages={xii+471},
  }

\bib{HTT}{book}{
   author={Hotta, R.},
   author={Takeuchi, K.},
   author={Tanisaki, T.},
   title={D-modules, perverse sheaves, and representation theory},
   publisher={Birkh\"auser, Boston},
   date={2008},
}

\bib{Saito_et_al}{article}{
  author={Jung, S.-J.},
  author={Kim, I.-K.},
  author={Yoon, Y.},
  author={Saito, M.},
 title={Hodge ideals and spectrum of isolated hypersurface singularities},
  journal={preprint, arXiv:1904.02453},
  date={2019},
}

\bib{FiniteMaps}{article}{
author={K\"{a}llstr\"{o}m, R.},
title={$D$-modules and finite maps},
journal={preprint arXiv:1811.06796},
date={2018},
}

\bib{Kashiwara}{article}{
   author={Kashiwara, M.},
   title={$B$-functions and holonomic systems. Rationality of roots of
   $B$-functions},
   journal={Invent. Math.},
   volume={38},
   date={1976/77},
   number={1},
   pages={33--53},
}

\bib{Kimura}{book}{
   author={Kimura, T.},
   title={Introduction to prehomogeneous vector spaces},
   series={Translations of Mathematical Monographs},
   volume={215},
   note={Translated from the 1998 Japanese original by Makoto Nagura and
   Tsuyoshi Niitani and revised by the author},
   publisher={American Mathematical Society, Providence, RI},
   date={2003},
   pages={xxii+288},
}

\bib{Kollar}{article}{
   author={Koll\'ar, J.},
   title={Singularities of pairs},
   conference={
      title={Algebraic geometry---Santa Cruz 1995},
   },
   book={
      series={Proc. Sympos. Pure Math.},
      volume={62},
      publisher={Amer. Math. Soc., Providence, RI},
   },
   date={1997},
   pages={221--287},
}

\bib{Lazarsfeld}{book}{
       author={Lazarsfeld, R.},
       title={Positivity in algebraic geometry II},  
       series={Ergebnisse der Mathematik und ihrer Grenzgebiete},  
       volume={49},
       publisher={Springer-Verlag, Berlin},
       date={2004},
}      

\bib{Loeser}{article}{
   author={Loeser, F.},
   title={Exposant d'Arnold et sections planes},
   journal={C. R. Acad. Sci. Paris S\'{e}r. I Math.},
   volume={298},
   date={1984},
   number={19},
   pages={485--488},
}

\bib{Malgrange}{article}{
   author={Malgrange, B.},
   title={Int\'{e}grales asymptotiques et monodromie},
   journal={Ann. Sci. \'{E}cole Norm. Sup. (4)},
   volume={7},
   date={1974},
   pages={405--430 (1975)},
}

      \bib{Malgrange2}{article}{
   author={Malgrange, B.},
   title={Le polyn{o}me de Bernstein d'une singularit\'{e} isol\'{e}e},
   conference={
      title={Fourier integral operators and partial differential equations},
      address={Colloq. Internat., Univ. Nice, Nice},
      date={1974},
   },
   book={
      publisher={Springer, Berlin},
   },
   date={1975},
   pages={98--119. Lecture Notes in Math., Vol. 459},
}

\bib{Malgrange3}{article}{
  author= {Malgrange, B.},
     title= {Polynomes de {B}ernstein-{S}ato et cohomologie \'evanescente},
 booktitle= {Analysis and topology on singular spaces, {II}, {III}
              ({L}uminy, 1981)},
    series = {Ast\'erisque},
    volume = {101},
    pages = {243--267},
 publisher = {Soc. Math. France, Paris},
      date = {1983},
      }

\bib{Matsumura}{book}{
   author={Matsumura, H.},
   title={Commutative ring theory},
   series={Cambridge Studies in Advanced Mathematics},
   volume={8},
   edition={2},
   note={Translated from the Japanese by M. Reid},
   publisher={Cambridge University Press, Cambridge},
   date={1989},
   pages={xiv+320},
}

\bib{MSS}{article}{
   author={Maxim, L.},
   author={Saito, M.},
   author={Sch\"{u}rmann, J.},
   title={Thom-Sebastiani theorems for filtered $\cD$-modules and for
   multiplier ideals},
   journal={Int. Math. Res. Not. IMRN},
   date={2020},
   number={1},
   pages={91--111},
}

\bib{MP1}{article}{
   author={Musta\c{t}\u{a}, M.},
   author={Popa, M.},
   title={Hodge ideals for ${\bf Q}$-divisors: birational approach},
   journal={J. \'{E}c. polytech. Math.},
   volume={6},
   date={2019},
   pages={283--328},
}

\bib{MP2}{article}{
      author={Musta\c t\u a, M.},
      author={Popa, M.},
      title={Hodge ideals for ${\mathbb Q}$-divisors, $V$-filtration, and minimal exponent},
      journal={Forum Math. Sigma},
      volume={8},
      date={2020},
      pages={Paper No. e19, 41},
}

\bib{Rees}{article}{
   author={Rees, D.},
   title={${\germ a}$-transforms of local rings and a theorem on
   multiplicities of ideals},
   journal={Proc. Cambridge Philos. Soc.},
   volume={57},
   date={1961},
   pages={8--17},
}

\bib{Sabbah}{article}{
      author={Sabbah, C.},
	title={$\cD$-modules et cycles \'{e}vanescents (d'apr\`{e}s
B.~Malgrange et M. Kashiwara)},
conference={
      title={G\'eom\'etrie alg\'ebrique
et applications III, La R\'{a}bida (1984)}},
book={
      series={Traveaux en Cours},
      volume={24},
      publisher={Hermann, Paris},
   },
        date={1984}, 
	pages={53--98},
}

\bib{Saito-MHP}{article}{
   author={Saito, M.},
   title={Modules de Hodge polarisables},
   journal={Publ. Res. Inst. Math. Sci.},
   volume={24},
   date={1988},
   number={6},
   pages={849--995},
}

\bib{Saito-MHM}{article}{
   author={Saito, M.},
   title={Mixed Hodge modules},
   journal={Publ. Res. Inst. Math. Sci.},
   volume={26},
   date={1990},
   number={2},
   pages={221--333},
}

\bib{Saito-B}{article}{
   author={Saito, M.},
   title={On $b$-function, spectrum and rational singularity},
   journal={Math. Ann.},
   volume={295},
   date={1993},
   number={1},
   pages={51--74},
}

\bib{Saito_microlocal}{article}{
   author={Saito, M.},
   title={On microlocal $b$-function},
   journal={Bull. Soc. Math. France},
   volume={122},
   date={1994},
   number={2},
   pages={163--184},
}

\bib{Saito-MLCT}{article}{
      author={Saito, M.},
	title={Hodge ideals and microlocal $V$-filtration},
	journal={preprint arXiv:1612.08667}, 
	date={2016}, 
}

\bib{Steenbrink}{article}{
   author={Steenbrink, J. H. M.},
   title={Semicontinuity of the singularity spectrum},
   journal={Invent. Math.},
   volume={79},
   date={1985},
   number={3},
   pages={557--565},
}

\bib{Swanson}{article}{
   author={Swanson, Irena},
   title={Multigraded Hilbert functions and mixed multiplicities},
   conference={
      title={Syzygies and Hilbert functions},
   },
   book={
      series={Lect. Notes Pure Appl. Math.},
      volume={254},
      publisher={Chapman \& Hall/CRC, Boca Raton, FL},
   },
   date={2007},
   pages={267--280},
}

\bib{Teissier1}{article}{
   author={Teissier, B.},
   title={Cycles \'{e}vanescents, sections planes et conditions de Whitney},
   conference={
      title={Singularit\'{e}s \`a Carg\`ese},
      address={Rencontre Singularit\'{e}s G\'{e}om. Anal., Inst. \'{E}tudes Sci.,
      Carg\`ese},
      date={1972},
   },
   book={
      publisher={Soc. Math. France, Paris},
   },
   date={1973},
   pages={285--362. Ast\'{e}risque, Nos. 7 et 8},
}

\bib{Teissier2}{article}{
   author={Teissier, B.},
   title={Vari\'{e}t\'{e}s polaires. I. Invariants polaires des singularit\'{e}s
   d'hypersurfaces},
   journal={Invent. Math.},
   volume={40},
   date={1977},
   number={3},
   pages={267--292},
}

\bib{Teissier3}{article}{
   author={Teissier, B.},
   title={Poly\`{e}dre de Newton jacobien et \'{e}quisingularit\'{e}},
   conference={
      title={Seminar on Singularities},
      address={Paris},
      date={1976/1977},
   },
   book={
      series={Publ. Math. Univ. Paris VII},
      volume={7},
      publisher={Univ. Paris VII, Paris},
   },
   date={1980},
   pages={193--221},
   translation={
   date={arXiv:1203.5595},
   },
}

\bib{Varchenko2}{article}{
   author={Varchenko, A. N.},
   title={The complex singularity index does not change along the stratum
   $\mu ={\rm const}$},
   journal={Funktsional. Anal. i Prilozhen.},
   volume={16},
   date={1982},
   number={1},
   pages={1--12, 96},
}

\end{biblist}

\end{document}